\begin{document}
%\date{\version}
\newtheorem{theorem}{Theorem}[section]
\newtheorem{lemma}[theorem]{Lemma}
\newtheorem{remark}[theorem]{Remark}
\newtheorem{definition}[theorem]{Definition}
\newtheorem{corollary}[theorem]{Corollary}
\newtheorem{example}[theorem]{Example}
\def\qedbox{\hbox{$\rlap{$\sqcap$}\sqcup$}}
\makeatletter
  \renewcommand{\theequation}{%
   \thesection.\alph{equation}}
  \@addtoreset{equation}{section}
 \makeatother
\title[Strong curvature homogeneity of type (1,3)]
{Singer invariants and strongly curvature homogeneous manifolds of type (1,3)}
\author{Corey Dunn*, Cullen McDonald}

\begin{address}{CD: Mathematics Department, California State University at San Bernardino,
San Bernardino, CA 92407, USA. Email: \it
cmdunn@csusb.edu.}\end{address}

\begin{address}{CM: Mathematics Department, Beloit College,
Beloit, WI, USA. Email: \it
mcdonaldc@beloit.edu.}\end{address}

\begin{abstract}
We extend the definition of  curvature homogeneity of type $(1,3)$ to include the possibility that there is a homothety between any two points of a manifold preserving the first $r$ covariant derivatives of the curvature operator simultaneously; we call this strong curvature homogeneity of type (1,3) up to order $r$.  We characterize these properties in terms of model spaces.  In addition, we also present two families of three-dimensional Lorentzian metrics on Euclidean space  to exhibit the behavior of this property.  The first example is curvature homogeneous of type (1,3) of all orders, but is not locally homogeneous.  Within this first family, being strongly curvature homogeneous of type (1,3) up to order $1$ implies local homogeneity.  The second example is strongly curvature homogeneous of type (1,3) up to order one, and is not locally homogeneous, showing that this new definition is not a trivial one.  Within this family, being strongly curvature homogeneous of type (1,3) up to order 2 implies local homogeneity.  
\end{abstract}

%%%%%%%%%%%%%%%%%

\keywords{curvature homogeneous, strongly curvature homogeneous of type $(1,3)$, Singer invariant. \newline
2010 {\it Mathematics Subject Classification.} Primary: 53B20, Secondary: 53B21, 53B30. \\ * corresponding author} \maketitle

\section{Introduction}         %%%%%%%%%%%%%%%%%%%%

Let $(M,g)$ be a smooth pseudo-Riemannian manifold of dimension $n$, and let $T_PM, T^*_PM$ and $g_P$ denote the tangent space of $M$ at $P \in M$, the cotangent space at $P$, and the metric $g$ restricted to the point $P$, respectively.  If $\nabla$ is the Levi-Civita connection of $(M,g)$, then one creates the Riemann curvature operator $\mathcal{R}_P \in T_PM \otimes (T_P^*M)^{\otimes3}$ and the Riemann curvature tensor $R_P \in \otimes^4 T_P^*M$ at $P$ for  $X, Y, Z, W \in T_PM$ as
\begin{equation*}\begin{array}{r c l}
\mathcal{R_P}(X, Y)Z &=& \nabla_X\nabla_YZ - \nabla_Y\nabla_XZ - \nabla_{[X,Y]}Z, {\rm\ and} \\
R_P(X, Y, Z, W) & = & g_P(\mathcal{R}_P(X, Y)Z, W).
\end{array}
\end{equation*}
In addition, let $\nabla^k\mathcal{R}_P$ and $\nabla^kR_P$ be the $k^{\rm th}$ covariant derivative of the curvature operator and curvature tensor at $P$, respectively.  

In this introductory section, we give a brief background of the study of curvature homogeneity, introduce model spaces as a preliminary notion which we will need, and finally, give an outline of the paper which summarizes our main results.

\subsection{Background}

Singer began the study of curvature homogeneity in 1960 \cite{S1960}, and since then there have been several generalizations studied by a number of authors (see \cite{G2007}, and \cite{K2011, K2013} for recent work relevant to this article).  We give the definitions of \emph{curvature homogeneity} \cite{G2007} and \emph{curvature homogeneity of type $(1,3)$} \cite{K2011, K2013} in Definition \ref{basicdefinitions}:

\begin{definition}  \label{basicdefinitions}
Let $(M,g)$ be a smooth pseudo-Riemannian manifold.
\begin{enumerate}
\item  $(M,g)$ is \emph{$r-$curvature homogeneous} if, for every $P, Q \in M$, there is a linear isometry $F:T_PM \to T_QM$ so that $F^*\nabla^kR_Q = \nabla^k R_P$ for every $k = 0, 1, \ldots, r$.  If $(M,g)$ is $r-$curvature homogeneous, we say $(M, g)$ is $CH_r$, and if it is additionally not $(r+1)-$curvature homogeneous, then we say $(M,g)$ is \emph{properly $CH_r$}.
\item  $(M,g)$ is \emph{$r-$curvature homogeneous of type $(1,3)$} if, for every $k = 0, 1, \ldots, r$,   there is a homothety $h_k:T_PM \to T_QM$ for every $P, Q \in M$ so that $h_k^*\nabla^k\mathcal{R}_Q = \nabla^k\mathcal{R}_P$.  If $(M,g)$ is $r-$curvature homogeneous of type $(1,3)$ then we say $(M,g)$ is $CH_r(1,3)$, and if it is additionally not $(r+1)-$curvature homogeneous of type $(1,3)$, then we say $(M,g)$ is \emph{properly $CH_r(1,3)$}.
\end{enumerate}
\end{definition}

Several remarks are in order concerning these definitions.  First, the curvature homogeneity condition of type $(1,3)$ appeared recently (see \cite{K2011, K2013}) and focuses on the preservation of the covariant derivatives of the curvature operator (of tensor type $(1,3)$) by a homothety (an isometry followed by a dilation), as opposed to the curvature tensor (of type $(0,4)$), which justifies the ``$(1,3)$'' in its definition.  This is not to be confused with the property of \emph{affine curvature homogeneity} since one has only a connection and no metric in that situation (see \cite{O1996, O1997} for work in this area).  Second, it is important to note--and it is the motivation of this article--that in the definition of $r-$curvature homogeneous of type $(1,3)$, the homotheties $h_k$ may have no relationship to each other, and we define \emph{strongly $r-$curvature homogeneous of type (1,3)} to distinguish this special phenomenon:

\begin{definition}  \label{stronglych}
Let $(M,g)$ be a smooth pseudo-Riemannian manifold.  $(M,g)$ is \emph{strongly $r-$curvature homogeneous of type $(1,3)$} if for every $P, Q \in M$,  there exists a homothety $h:T_PM \to T_QM$ so that  $h^*\nabla^k\mathcal{R}_Q = \nabla^k\mathcal{R}_P$ for all $k = 0, \ldots, r$.  If $(M,g)$ is strongly $r-$curvature homogeneous of type $(1,3)$ then we say $(M,g)$ is $SCH_r(1,3)$, and if it is additionally not strongly $(r+1)-$curvature homogeneous of type $(1,3)$, then we say $(M,g)$ is \emph{properly $SCH_r(1,3)$}.
\end{definition}

As it has been remarked by previous authors, the property of being properly $SCH_r(1,3)$ seems very restrictive (see the discussion on page 130 of \cite{K2013}).  The second example presented in this paper is one that is properly $SCH_1(1,3)$  (see Theorem \ref{Mh}); this phenomenon has, as of yet, not yet been known to have been possible, and demonstrates that the property $SCH_r(1,3)$ is an interesting one.  

There are obvious relationships between some of the types of curvature homogeneity listed in Definitions \ref{basicdefinitions} and \ref{stronglych}.  First, if a manifold is locally homogeneous, then it satisfies both of (1) and (2) for all $r$ in Definition \ref{basicdefinitions}, and is $SCH_r(1,3)$ for all $r$.   Second, if a manifold is $CH_r$ then it is  $CH_r(1,3)$, and $SCH_r(1,3)$.  Third, if a manifold is $SCH_r(1,3)$ then it is also $CH_r(1,3)$.

A foundational result in the broad study of curvature homogeneity is credited to Singer \cite{S1960} (in the Riemannian case) and by Podesta and Spiro  \cite{PS04} (in the pseudo-Riemannian case):

\begin{theorem} \label{Singer}
Let $(M,g)$ be a smooth pseudo-Riemannian manifold of dimension $n$ and signature $(p,q)$.  There is a constant $k_{p, q} \leq n(n-1)/2$ so that if $(M,g)$ is $k_{p,q}-$curvature homogeneous, then it is locally homogeneous.
\end{theorem}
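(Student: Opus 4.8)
The plan is to prove this by Singer's method: one attaches to the curvature data at each point a descending chain of Lie algebras, whose stabilization order is the \emph{Singer invariant}, shows that this invariant is (under the curvature-homogeneity hypothesis) point-independent and bounded by $n(n-1)/2$, and then upgrades the resulting pointwise coincidence of ``infinitesimal models'' to a \emph{local} coincidence of the metrics by an Ambrose--Singer--type integration argument --- following Singer \cite{S1960} in the Riemannian case and Podesta--Spiro \cite{PS04} in general. Concretely, fix $P \in M$ and for each integer $k \ge 0$ set
\[
  \mathfrak{g}_k(P) \;=\; \bigl\{\, A \in \mathfrak{so}(T_PM, g_P) : A \cdot \nabla^i R_P = 0 \ \text{for all } 0 \le i \le k \,\bigr\},
\]
where a skew-symmetric endomorphism $A$ acts on tensors at $P$ as a derivation. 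Each $\mathfrak{g}_k(P)$ is a Lie subalgebra of $\mathfrak{so}(T_PM,g_P)$ and $\mathfrak{g}_0(P) \supseteq \mathfrak{g}_1(P) \supseteq \cdots$; living inside a space of dimension $n(n-1)/2$, this chain stabilizes, and we let $k_S(P)$ denote the least index with $\mathfrak{g}_{k_S(P)}(P) = \mathfrak{g}_{k_S(P)+1}(P)$, so that $\mathfrak{g}_k(P) = \mathfrak{g}_{k_S(P)}(P)$ for all $k \ge k_S(P)$. I would then take $k_{p,q} = n(n-1)/2$ (a sharper value is obtained by taking the supremum of $k_S(P)+1$ over all signature-$(p,q)$ manifolds).

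Now suppose $(M,g)$ is $k_{p,q}$-curvature homogeneous. If $R \equiv 0$ the manifold is flat, hence locally homogeneous, and we are done; otherwise $CH_0$ forces $R_P \ne 0$ at every point, so each $\mathfrak{g}_0(P)$ is a \emph{proper} subalgebra of $\mathfrak{so}(T_PM,g_P)$, and a dimension count on the descending chain gives $k_S(P)+1 \le n(n-1)/2 = k_{p,q}$. Moreover, for any $P,Q \in M$ the linear isometry $F : T_PM \to T_QM$ with $F^*\nabla^i R_Q = \nabla^i R_P$ for $0 \le i \le k_{p,q}$ conjugates $\mathfrak{g}_i(P)$ onto $\mathfrak{g}_i(Q)$ for each such $i$; hence $\dim \mathfrak{g}_i(P)$ is independent of $P$ in this range, the stabilization index $k_S := k_S(P)$ is a constant with $k_S+1 \le k_{p,q}$, and the \emph{infinitesimal model} of $(M,g)$ --- the isomorphism class of the tuple $\bigl(T_PM, g_P, R_P, \nabla R_P, \dots, \nabla^{k_S+1}R_P\bigr)$ together with $\mathfrak{g}_{k_S}(P)$ --- is the same at every point. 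The theorem is thereby reduced to the implication: a $(k_S+1)$-curvature homogeneous manifold is locally homogeneous, i.e. the maps $F$ integrate to genuine local isometries.

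For this last implication I would pass to the Ambrose--Singer / Nomizu--Tricerri formulation in terms of homogeneous structures. Using that the model has stabilized at level $k_S$ --- i.e. $\mathfrak{g}_{k_S}(P) = \mathfrak{g}_{k_S+1}(P)$, so no new infinitesimal constraints arise at order $k_S+1$ --- together with $(k_S+1)$-curvature homogeneity, one can choose at each point a ``generator'' $S \in T_P^*M \otimes \mathfrak{so}(T_PM,g_P)$ whose derivation action reproduces the model's prescribed covariant derivatives of curvature, and arrange that $\tilde\nabla := \nabla - S$ is a metric connection ($\tilde\nabla g = 0$) satisfying $\tilde\nabla S = 0$ and $\tilde\nabla \tilde R = 0$, where $\tilde R$ is the curvature of $\tilde\nabla$. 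Such an $S$ is a \emph{homogeneous structure} on $(M,g)$, and the local (pseudo-Riemannian) Ambrose--Singer theorem then yields that $(M,g)$ is locally homogeneous, completing the proof.

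The main obstacle is precisely this last step: producing the tensor field $S$ \emph{globally and smoothly} and verifying the parallelism equations $\tilde\nabla S = 0$ and $\tilde\nabla \tilde R = 0$. Pointwise the admissible generators form an affine space over a space modeled on $\mathfrak{g}_{k_S}$, and the stabilization condition is exactly what makes the linear system defining $S$ consistent; the natural arena is the bundle of frames adapted to the model (those in which $g, R, \dots, \nabla^{k_S+1}R$ take their model values), on which the distinguished isometries of the curvature-homogeneity hypothesis, the structure equations of $\nabla$, and the second Bianchi identity together cut out a distribution whose integrability is equivalent to the desired parallelism. Proving that integrability --- and, in the pseudo-Riemannian setting, doing so without the compactness of the structure group that was available in Singer's original Riemannian argument --- is the technical heart of the theorem.
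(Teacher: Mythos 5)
The paper offers no proof of Theorem \ref{Singer}: it is quoted as a known result, with the Riemannian case attributed to Singer \cite{S1960} and the pseudo-Riemannian case to Podesta--Spiro \cite{PS04}, so there is no in-paper argument to compare against. Your outline does follow the strategy of those references -- the descending chain $\mathfrak{g}_0(P)\supseteq\mathfrak{g}_1(P)\supseteq\cdots$ of stabilizer algebras, its stabilization index $k_S$, the observation that the isometries supplied by curvature homogeneity conjugate the chains and make $k_S$ and the infinitesimal model point-independent, and the final passage to local homogeneity via an Ambrose--Singer homogeneous structure. But as a proof it has a genuine gap, which you yourself flag: the implication ``chain stabilized at $k_S$ together with curvature homogeneity to order $k_S+1$ yields a smooth tensor field $S$ with $\tilde\nabla g=0$, $\tilde\nabla\tilde R=0$, $\tilde\nabla S=0$, hence local homogeneity'' is not a routine finishing step -- it \emph{is} the theorem. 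Your text describes where that argument should live (the bundle of adapted frames, a distribution cut out by the structure equations and the second Bianchi identity) but neither constructs $S$ nor proves the integrability, and in the pseudo-Riemannian setting one must additionally deal with the noncompact isotropy issues that Podesta--Spiro handle. What you have is a correct road map for the known proof, not a proof.

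One concrete misstep along the way: you claim that $R_P\neq 0$ forces $\mathfrak{g}_0(P)$ to be a \emph{proper} subalgebra of $\mathfrak{so}(T_PM,g_P)$. That is false: the curvature tensor of a space form, a nonzero multiple of the standard tensor built from $g_P$ alone, is annihilated by the derivation action of the full orthogonal algebra, so $\mathfrak{g}_0(P)=\mathfrak{so}(T_PM,g_P)$ while $R_P\neq 0$. Consequently the inequality $k_S(P)+1\le n(n-1)/2$ does not follow from that properness claim; the correct counting is simply that the chain can drop in dimension at most $\dim\mathfrak{so}(p,q)=n(n-1)/2$ times before stabilizing (and in the space-form case it has already stabilized at $k_S=0$), which is how the bound on $k_{p,q}$ is obtained in \cite{S1960} and \cite{PS04}.
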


The number $k_{p,q}$ is known as the \emph{Singer number} of a manifold of signature $(p,q)$.  Opozda has a similar result for affine curvature homogeneity \cite{O1997}.  Generally speaking, one wants to know what $k_{p,q}$ is for any $p$ and $q$.  A \emph{Singer-type} number $k$ is a number where if a manifold satisfies a certain type of curvature homogeneity up to order $k$, then it is locally homogeneous (or locally affine homogeneous, in the case of affine curvature homogeneity).  It is clear that $k_{0,2} = k_{1,1} = 0$.  It is known \cite{SSV1, SSV2} that $k_{0, 3} = k_{0,4} = 1$, and that \cite{BD} $k_{1,2} = 2$ (this number shall play a role in Assertion (4) of Theorem \ref{Mh}).  Recent work \cite{MP2009} shows that $k_{1,3} = 3$.   More generally, the examples in \cite{GS2004} show that $k_{p,q} \geq \min\{p, q\}$ and that \cite{Gr1986} $k_{0,n} \leq \frac32n - 1$.  There are still no known examples of properly $CH_1$ Riemannian manifolds, and it is conjectured that are are none.

\subsection{Model spaces}

We phrase many of our results in terms of model spaces. This short subsection provides a definition of model spaces and relates them to curvature homogeneity.

Let $V$ be a finite dimensional real vector space, and let $V^* = {\rm Hom}_{\mathbb{R}}(V, \mathbb{R})$ be its dual. If $\alpha_i \in \otimes^{n_i} V^*$ for $i = 1, \ldots, m$ and nonnegative integers $n_i$, then the tuple $(V, \alpha_1, \ldots, \alpha_m)$ is called a \emph{model space}.  For example, if $(M,g)$ is a smooth pseudo-Riemannian manifold as defined earlier, then we shall denote
$$
\mathcal{M}_r(P) := (T_PM, g_P, R_P, \ldots, \nabla^mR_P),
$$
and point out that $\mathcal{M}_r(P)$ is a model space.  Here, $\alpha_1 = g_P \in \otimes^2 T_P^*M$, and $\alpha_{k} = \nabla^{k-2}R_P \in \otimes^{4+k}T_P^*M$ for $k \geq 2$.  These model spaces provide an algebraic portrait of the curvature of a manifold at any point, and motivates the terminology.  In this situation, the tensors $\alpha_i$ have implicit symmetries:  $\alpha_1$ is symmetric, $\alpha_2$ is an algebraic curvature tensor, and $\alpha_i$ for $i \geq 3$ are algebraic covariant derivative curvature tensors \cite{G2007}.

Two model spaces $(V, \alpha_1, \ldots, \alpha_m)$ and $(W, \beta_1, \ldots, \beta_m)$ are \emph{isomorphic} if there is a vector space isomorphism $F:V \to W$ so that $F^*\beta_i = \alpha_i$, where the standard notation $F^*$ denotes precomposition in each input.  

Returning now to the geometric setting,  we apply this terminology to curvature homogeneity.  Let $A_k \in \otimes^{4+k}V^*$ share the same universal symmetries as the $k^{{\rm th}}$ covariant derivative as the Riemann curvature tensor, and let $\varphi$ be a nondegenerate inner product on $V$.  Referring to Definition \ref{basicdefinitions}, it is easy to see that a manifold is $CH_r$ if for each $P$ there is an isomorphism
$$
\mathcal{M}_r(P) \cong (V, \varphi, A_0, \ldots, A_r).
$$

Kowalski and Van\u{z}urov\'{a} \cite{K2013} have reformulated the property of being $CH_r(1,3)$ in terms of the curvature tensors $\nabla^kR$ instead of the curvature operators $\nabla^k\mathcal{R}$.  For convenience, we follow their discussion and introduce a condition $Q(k)$ for a smooth manifold $(M,g)$ as follows:

\begin{tabular}{r c p{4in}}
$Q(k)$& : & for each fixed $P \in M$, there exists a smooth positive function $\psi_k:M \to \mathbb{R}$ with $\psi(P) = 1$ so that for every $Q\in M$ there is a linear isometry $F_k:T_PM \to T_QM$ so that $\nabla^k R_P = \psi_k(Q)F_k^*\nabla^kR_Q$.
\end{tabular}

\begin{theorem} \label{KV}
The smooth manifold $(M,g)$ is $CH_r(1,3)$ if and only if for each fixed point $P\in M$, the property $Q(k)$ is satisfied for each $k = 0, \ldots, r$.
\end{theorem}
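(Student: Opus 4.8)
The plan is to unwind the definition of $CH_r(1,3)$ — which asks for a separate homothety $h_k$ at each order $k$ — and translate each homothety $h_k$ into a linear isometry $F_k$ together with a scaling factor, thereby producing exactly the condition $Q(k)$; then run the argument in reverse. First I would recall that a homothety $h_k: T_PM \to T_QM$ is by definition an isometry composed with a dilation, so $h_k = \lambda_k \cdot I_k$ for some scalar $\lambda_k > 0$ (depending on $P$ and $Q$) and some linear isometry $I_k: T_PM \to T_QM$; equivalently $g_Q(h_kX, h_kY) = \lambda_k^2\, g_P(X,Y)$. The key bookkeeping point is how a dilation by $\lambda_k$ interacts with a tensor of type $(1,3)$: if one raises the index of $\nabla^k\mathcal{R}$ using $g$ to get $\nabla^kR$ (a type $(4+k)$-covariant tensor), a dilation by $\lambda_k$ on the tangent space scales $\nabla^k R$ by $\lambda_k^{2}$ (one factor of $g$ is used), so $h_k^*\nabla^k\mathcal{R}_Q = \nabla^k\mathcal{R}_P$ becomes, after lowering indices with the respective metrics, $I_k^*\nabla^k R_Q = \lambda_k^{-2}\,\nabla^k R_P$, i.e. $\nabla^k R_P = \lambda_k^{2}\, I_k^*\nabla^k R_Q$. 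Setting $\psi_k(Q) := \lambda_k(P,Q)^2$ — and observing $\psi_k(P) = 1$ since $h_k$ may be taken to be the identity at $P=Q$ — and $F_k := I_k$ yields precisely the defining relation of $Q(k)$, once one checks that $\psi_k$ can be chosen to depend smoothly on $Q$ (this follows because the ratio of the norms of $\nabla^k R_P$ and $\nabla^k R_Q$ in any fixed auxiliary positive-definite reference metric is smooth in $Q$, and $\lambda_k^2$ is pinned down by that ratio whenever $\nabla^kR \neq 0$; the case $\nabla^kR \equiv 0$ is trivial as any isometry works and we may take $\psi_k \equiv 1$).

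For the converse, suppose $Q(k)$ holds for each $k = 0, \dots, r$. Fix $P, Q$. For each $k$ we are handed a smooth positive $\psi_k$ with $\psi_k(P) = 1$ and a linear isometry $F_k: T_PM \to T_QM$ with $\nabla^k R_P = \psi_k(Q) F_k^*\nabla^k R_Q$. Define $h_k := \mu_k F_k$ where $\mu_k = \mu_k(Q) > 0$ is chosen so that the $(1,3)$-tensor relation holds: running the index computation above in reverse, $h_k^*\nabla^k\mathcal{R}_Q = \nabla^k\mathcal{R}_P$ holds exactly when $\mu_k$ is the appropriate power of $\psi_k(Q)$ (concretely $\mu_k = \psi_k(Q)^{1/2}$, since lowering one index with $g$ on each side and comparing against $\nabla^k R_P = \psi_k(Q) F_k^*\nabla^k R_Q$ forces the dilation factor so that the two $g$-scalings cancel correctly). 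Then each $h_k$ is a homothety $T_PM \to T_QM$ with $h_k^*\nabla^k\mathcal{R}_Q = \nabla^k\mathcal{R}_P$, which is exactly the statement that $(M,g)$ is $CH_r(1,3)$; note that here we make no attempt to take the $h_k$ equal across different $k$ — that is the stronger $SCH_r(1,3)$ condition and is not needed.

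I expect the main obstacle to be purely notational rather than conceptual: getting the exponent on the dilation factor correct and consistent in both directions when passing between the $(1,3)$-operator $\nabla^k\mathcal{R}$ and the $(0,4+k)$-tensor $\nabla^k R$, since a dilation acts differently on covariant and contravariant slots and exactly one metric contraction is involved in $R = g(\mathcal{R}\,\cdot,\cdot)$. A secondary point requiring a sentence of care is the smoothness and normalization of $\psi_k$ (equivalently of the dilation factors $\lambda_k$ or $\mu_k$): one should remark that a homothety between inner-product spaces has a well-defined positive ratio, that this ratio varies smoothly with $Q$ because it is determined by $g_P$ and $g_Q$ (or by norms of $\nabla^kR$), and that the normalization $\psi_k(P) = 1$ corresponds to taking $h_k = \mathrm{id}$ when $Q = P$. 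Everything else is a direct, if slightly tedious, unwinding of definitions, and in fact this theorem is essentially a restatement of the reformulation of Kowalski and Van\u{z}urov\'{a} \cite{K2013}, so the proof can reasonably be kept short with a pointer to that reference for the full index computations.
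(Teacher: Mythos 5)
Your overall plan---decompose each homothety as $h_k=\lambda_k F_k$ with $F_k$ a linear isometry, translate the operator condition $h_k^*\nabla^k\mathcal{R}_Q=\nabla^k\mathcal{R}_P$ into a scaled condition on the $(0,4+k)$-tensors, and run it backwards---is the right one, and it is essentially the computation this paper carries out in proving Theorem \ref{SCHchar} (Theorem \ref{KV} itself is quoted from \cite{K2013} rather than proved here). However, your central bookkeeping identity is wrong, and it is precisely the step you flagged as the main obstacle. Pulling back the $(1,3+k)$-operator by $h=\lambda F$ contributes $\lambda^{3+k}$ from the covariant slots and $\lambda^{-1}$ from the contravariant slot, so $h^*\nabla^k\mathcal{R}_Q=\nabla^k\mathcal{R}_P$ is equivalent to $\nabla^kR_P=\lambda^{k+2}F^*\nabla^kR_Q$, not to $\nabla^kR_P=\lambda^{2}F^*\nabla^kR_Q$. (It is true that $h^*\nabla^kR_Q=\lambda^{2}\nabla^kR_P$, but $h^*$ and $F^*$ differ on a $(0,4+k)$-tensor by $\lambda^{4+k}$, which is where your missing $\lambda^{k}$ lives.) This is exactly the relation $\nabla^kR_P=\lambda^{k+2}F^*A_k$ obtained in the paper's proof of Theorem \ref{SCHchar}, and it is the reason the powers $\psi^{\frac12(k+2)}$ appear there. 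Consequently, in your converse direction the choice $\mu_k=\psi_k(Q)^{1/2}$ does not yield $h_k^*\nabla^k\mathcal{R}_Q=\nabla^k\mathcal{R}_P$ for any $k\geq 1$; you need $\mu_k=\psi_k(Q)^{1/(k+2)}$, and in the forward direction $\psi_k:=\lambda_k^{k+2}$. Because $Q(k)$ only demands \emph{some} smooth positive $\psi_k$, chosen separately for each $k$, this is a repairable reparametrization and the equivalence survives; but as written your key step fails.

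A secondary point: your smoothness argument for $\psi_k$ does not work as stated. The isometries $F_k$ preserve the indefinite metric $g$, not an auxiliary positive-definite metric, so ratios of auxiliary norms are not determined by them; moreover the natural invariant norms can vanish identically in this setting (the paper notes $||R||^2=||\nabla R||^2=0$ for its examples), and the dilation factor of a homothety satisfying the pullback condition need not even be unique, since a $g$-isometry can rescale a null curvature tensor (compare the maps $X\mapsto aX$, $Y\mapsto a^{-1}Y$ implicit in Lemma \ref{modeliso}). So extracting a \emph{smooth} choice of $\psi_k$ from a pointwise family of homotheties requires an actual argument; the paper defers this to \cite{K2013} (and, for Theorem \ref{SCHchar}, simply invokes smoothness of $\nabla^kR$), and it is fine for you to cite that reference as well, but the auxiliary-metric pinning argument should be dropped.
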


In other words, they have proven that $(M,g)$ is $CH_r(1,3)$ if and only if for every $P \in M$ there are isomorphisms
$$
(T_PM, g_P, \nabla^kR_P) \cong (V, \varphi, \psi_k(P)A_k),
$$
where $\varphi$ is a nondegenerate inner product sharing the same signature as $g$,  $A_k \in \otimes^{4+k}V^*$ sharing the same universal symmetries as $\nabla^kR_P$, and the $\psi_k:M \to \mathbb{R}$ are smooth positive functions on $M$.

Since, in the definition of $CH_r(1,3)$, there is no requirement that the homotheties $h_k$ be related, one follows the proof of Theorem \ref{KV} in \cite{K2013} to see that there is no requirement that the isomorphisms $F_k$ have any relation.  It is one of the goals of this paper to understand how restrictive the geometry of a manifold must be in the event that the homotheties $h_k$ (and therefore the isometries $F_k$) are all equal, which motivates our definition of strong curvature homogeneity of type (1,3) up to order $r$.  We prove a result similar to Theorem \ref{KV} in Section 2 characterizing the $SCH_r(1,3)$ property in terms of model spaces:

\begin{theorem}  \label{SCHchar}
Let $(M,g)$ be a smooth manifold, and let $(V, \varphi, A_0, \ldots, A_r)$ be a model space, where $\varphi$ is a nondegenerate inner product, and $A_k \in \otimes^{4+k}V^*$ for all $k$.  The manifold $(M,g)$ is $SCH_r(1,3)$ if and only if there exists a smooth positive function $\psi:M \to \mathbb{R}$ and for each $P \in M$ there are isomorphisms
$$
\mathcal{M}_r(P)  \cong (V, \varphi, \psi(P)A_0, \psi^{3/2}(P)A_1, \ldots, \psi^{\frac{1}{2}(r+2)}(P)A_r).
$$
\end{theorem}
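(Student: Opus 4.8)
The plan is to route both directions through a single elementary scaling identity and then read the theorem off in each direction; the only step requiring genuine work will be the smoothness of $\psi$. First I would record the following piece of multilinear algebra, call it the \emph{scaling lemma}: if $h\colon T_PM\to T_QM$ is a homothety, written $h=\mu\Phi$ with $\mu>0$ and $\Phi$ a linear isometry (so $\Phi^{*}g_Q=g_P$), then for every $k\ge 0$ one has $h^{*}\nabla^{k}\mathcal R_Q=\nabla^{k}\mathcal R_P$ if and only if $\mu^{k+2}\,\Phi^{*}\nabla^{k}R_Q=\nabla^{k}R_P$. The reason is that $\nabla^{k}\mathcal R$ is a tensor of type $(1,k+3)$, so under $h=\mu\Phi$ the $k+3$ covariant slots contribute $\mu^{k+3}$ and the contravariant slot contributes $\mu^{-1}$, giving $h^{*}\nabla^{k}\mathcal R_Q=\mu^{k+2}\,\Phi^{*}\nabla^{k}\mathcal R_Q$; lowering the free index with the metric and using $\Phi^{*}g_Q=g_P$ turns this into the stated identity between $(0,k+4)$ tensors, the passage being reversible since $g_P$ is nondegenerate. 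This lemma is the only place where the tensor type $(1,3)$, as opposed to the $(0,4)$ tensors making up $\mathcal M_r(P)$, enters.

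For sufficiency, suppose a smooth positive $\psi$ and, for each $P$, an isomorphism $\Phi_P\colon (V,\varphi,\psi(P)A_0,\dots,\psi(P)^{\frac12(r+2)}A_r)\to\mathcal M_r(P)$ are given, so that $\Phi_P$ is a linear isometry with $\Phi_P^{*}\nabla^{k}R_P=\psi(P)^{\frac12(k+2)}A_k$. For $P,Q\in M$ I would set $h_{PQ}:=c\,(\Phi_Q\circ\Phi_P^{-1})\colon T_PM\to T_QM$, a homothety with isometric part $\Phi_Q\circ\Phi_P^{-1}$ and dilation $c=(\psi(P)/\psi(Q))^{1/2}>0$. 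A one-line computation gives $(\Phi_Q\circ\Phi_P^{-1})^{*}\nabla^{k}R_Q=(\psi(Q)/\psi(P))^{\frac12(k+2)}\nabla^{k}R_P$, so by the scaling lemma $h_{PQ}^{*}\nabla^{k}\mathcal R_Q=\nabla^{k}\mathcal R_P$ is equivalent to $c^{k+2}(\psi(Q)/\psi(P))^{\frac12(k+2)}=1$, and the chosen $c$ satisfies this \emph{simultaneously for all $k=0,\dots,r$}. This simultaneity is precisely why the exponents $\tfrac12(k+2)$ are the right ones: if the coefficient of $A_k$ were $\psi^{e_k}$, one would need $c=(\psi(P)/\psi(Q))^{e_k/(k+2)}$ to be independent of $k$, which forces $e_k=\tfrac12(k+2)$. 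Thus $h_{PQ}$ witnesses $SCH_r(1,3)$.

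For necessity, fix $P_0\in M$ and set $V=T_{P_0}M$, $\varphi=g_{P_0}$, $A_k=\nabla^{k}R_{P_0}$. For each $P$, the hypothesis supplies a homothety $h_P\colon T_{P_0}M\to T_PM$ with $h_P^{*}\nabla^{k}\mathcal R_P=\nabla^{k}\mathcal R_{P_0}$ for $k=0,\dots,r$; write $h_P=\mu(P)\Phi_P$ with $\Phi_P$ a linear isometry and $\mu(P)>0$. The scaling lemma gives $\Phi_P^{*}\nabla^{k}R_P=\mu(P)^{-(k+2)}A_k$, so setting $\psi(P):=\mu(P)^{-2}$ makes $\Phi_P$ an isomorphism $(V,\varphi,\psi(P)A_0,\dots,\psi(P)^{\frac12(r+2)}A_r)\to\mathcal M_r(P)$; taking $h_{P_0}=\mathrm{id}$ normalizes $\psi(P_0)=1$, and $\psi>0$ is clear.

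The one remaining — and genuinely delicate — point is that $\psi$ is smooth. If every $\nabla^{k}R$ with $k\le r$ vanishes at $P_0$, then pulling back by the $h_P$ shows it vanishes on all of $M$, and one simply takes $\psi\equiv 1$ and $A_k=0$. Otherwise I would pin $\mu(P)^{2}$ down by scalar curvature invariants: the case $k=0$ of the scaling lemma reads $\Phi_P^{*}R_P=\mu(P)^{-2}R_{P_0}$, and since $\Phi_P$ intertwines the metrics and any polynomial curvature invariant $I$ is built functorially from $R,\nabla R,\dots$ and $g^{-1}$, one obtains $I(P)=\mu(P)^{-w}I(P_0)$, where $w$ is the weight of $I$ (for example $\tau_P=\mu(P)^{-2}\tau_{P_0}$ for the scalar curvature). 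Whenever some such invariant $I$ built from $R,\dots,\nabla^{r}R$ is nonzero at $P_0$, this identity shows $I$ is nowhere zero of constant sign, and hence $\psi(P)=\mu(P)^{-2}=(I(P)/I(P_0))^{2/w}$ is a smooth positive function of $P$, manifestly independent of the chosen $h_P$. The only case left open by this is that some $\nabla^{k_0}R$ is nowhere zero but has all of its scalar invariants vanishing, and there I would invoke the smooth-frame construction underlying Theorem~\ref{KV} (applicable since $SCH_r(1,3)$ implies $CH_r(1,3)$) to produce the $h_P$, hence $\mu(P)$ and $\psi(P)$, smoothly. I expect this last case to be the main obstacle; the rest of the proof is formal once the scaling lemma is in hand.
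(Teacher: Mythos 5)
Your proposal is correct and takes essentially the same route as the paper: your ``scaling lemma'' is exactly the paper's metric-pairing computation showing that $h^{*}\nabla^{k}\mathcal R_Q=\nabla^{k}\mathcal R_P$ is equivalent to $\lambda^{k+2}F^{*}\nabla^{k}R_Q=\nabla^{k}R_P$ for $h=\lambda F$, and both implications then run just as you describe (the paper anchors the model space at the \emph{target} of the homothety, so its $\psi=\lambda^{2}$ is your $\mu^{-2}$; otherwise the arguments coincide). The only divergence is that you are more careful about the smoothness of $\psi$ than the paper, which dismisses it with the single clause that it ``is implied by the smoothness of $\nabla^{k}R$''; the degenerate case you flag --- all scalar invariants vanishing, which in fact occurs for the paper's own examples --- is not addressed there either, so your treatment is, if anything, the more honest one.
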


\subsection{Outline of the paper}

The following is a brief outline of the paper.  In Section \ref{section2}, we prove Theorem \ref{SCHchar}.  The remainder of the paper is devoted to the exposition of two examples which we define here.

Let $M = \mathbb{R}^3$, with coordinates $(t, x, y)$.  Let $h$ and $f$ be smooth real valued functions on $M$, where $f = f(x)$ is a function of only $x$ and $h = h(t)$ is only a function of $t$.  Let $g_f$ and $g_h$ be metrics on $M$ whose nonzero terms are given below on the coordinate frames:
$$
g_f(\partial_t, \partial_t) = e^{2f}, \quad g_f(\partial_x, \partial_y) = 1,
$$
and
$$
g_h(\partial_t, \partial_t) = g_h(\partial_x, \partial_y) = 1, \quad g_h(\partial_x, \partial_x) = -2h.
$$
In Section \ref{section3}, we prove the following concerning the manifolds $(M, g_f)$:

\begin{theorem}  \label{Mf}
Let $(M,g_f)$ be as above,  set $\Delta = f'' + (f')^2$, and $\Delta^{(k)} = \frac{d^k\Delta}{dx^k}$.   Suppose $\Delta$ and $\Delta^{(1)}$ are nonvanishing.
\begin{enumerate}
\item  The manifolds $(M, g_f)$ are $CH_0$.
\item  If $\Delta^{(k)} 0$ are nonvanishing for all $k$, then $(M, g_f)$ is $CH_r(1,3)$ for every $r$.
\item  The quantity $(\Delta^{(1)})^2$ is an isometry invariant, and if it is nonconstant and nonvanishing, then $(M, g_f)$ is not $CH_1$, and hence not locally homogeneous.
\item  If $f$ is analytic and $(M, g_f)$ is $SCH_1(1,3)$, then it is $CH_r$ for every $r$.  As a result,  $SCH_1(1,3)$ implies local homogeneity.
\end{enumerate}
\end{theorem}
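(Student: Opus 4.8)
The plan is to reduce Theorem~\ref{Mf} to one explicit computation in the frame $E_1=e^{-f}\partial_t$, $E_2=\partial_x$, $E_3=\partial_y$ (with dual coframe $\theta^i$) and to the resulting one-parameter family of algebraic models. Since $f=f(x)$, the only nonzero Christoffel symbols in the coordinate frame are $\Gamma^y_{tt}=-f'e^{2f}$ and $\Gamma^t_{tx}=\Gamma^t_{xt}=f'$; hence in the $E$-frame the metric has constant components ($g(E_1,E_1)=g(E_2,E_3)=1$), the only nonzero connection coefficients are $\nabla_{E_1}E_1=-f'E_3$ and $\nabla_{E_1}E_2=f'E_1$, and both $\partial_y$ and $dx=\theta^2$ are parallel. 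One then computes $R=\Delta\,T_0$ with $T_0=\theta^1\wedge\theta^2\otimes\theta^1\wedge\theta^2$, a fixed (and, one checks, parallel) algebraic curvature tensor in the $E$-frame; equivalently the Ricci tensor is $\rho=-\Delta\,dx\otimes dx$. As $T_0$ is parallel, $\nabla^kR=(\nabla^k\Delta)\otimes T_0$, and a short induction using the connection coefficients above shows that in the $E$-frame $\nabla^kR$ has a single independent component, namely $\Delta^{(k)}$ (all $k$ ``derivative'' slots equal to $E_2$, the four curvature slots equal to $(E_1,E_2,E_1,E_2)$).

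Granting this, assertions (1) and (2) are immediate. For (1): since $\Delta$ is nowhere zero, pass to the frame $(E_1,|\Delta|^{-1/2}\partial_x,|\Delta|^{1/2}\partial_y)$, which still has constant metric components and in which $R$ is the point-independent tensor $\mathrm{sgn}(\Delta)\,T_0$; comparing this frame at two points yields the required linear isometry, so $(M,g_f)$ is $CH_0$. For (2): if $\Delta^{(k)}$ is nowhere zero then the frame $(E_1,|\Delta^{(k)}|^{-1/(k+2)}\partial_x,|\Delta^{(k)}|^{1/(k+2)}\partial_y)$ normalizes $\nabla^kR$ to a point-independent tensor, which gives property $Q(k)$; by Theorem~\ref{KV} this gives $CH_r(1,3)$ for all $r$. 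Crucially these rescalings depend on $k$, which is exactly why no single isometry normalizes $g,R,\dots,\nabla^rR$ simultaneously.

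For (3) I would exploit that $dx$ is parallel, so $\rho=-\Delta\,dx^{\otimes2}$ and $\nabla\rho=-\Delta^{(1)}\,dx^{\otimes3}$; hence the covariantly built tensors $\nabla\rho\otimes\nabla\rho$ and $\rho\otimes\rho\otimes\rho$ are everywhere proportional, with ratio $-(\Delta^{(1)})^2/\Delta^3$. This exhibits $(\Delta^{(1)})^2$ — normalized by the power of $\Delta$ fixed by the algebraic type of the curvature operator — as an isometry invariant. If $(M,g_f)$ were $CH_1$ this first-order invariant would be locally constant, so a nonconstant and nonvanishing $(\Delta^{(1)})^2$ obstructs $CH_1$; and local homogeneity implies $CH_1$, giving the last clause.

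Assertion (4) is the crux. By Theorem~\ref{SCHchar}, $SCH_1(1,3)$ produces a smooth $\psi>0$ and linear isometries identifying $\mathcal M_1(P)$ with $(V,\varphi,\psi(P)A_0,\psi(P)^{3/2}A_1)$. Using the normal form above, $A_0$ must be a multiple of $T_0$ and $A_1$ a multiple of $\theta^2\otimes T_0$, and one classifies the isometries of $(V,\varphi)$ preserving these algebraic types: modulo null rotations and sign changes they are the diagonal rescalings $E_2\mapsto\nu E_2$, $E_3\mapsto\nu^{-1}E_3$, under which $T_0\mapsto\nu^2T_0$ and $\theta^2\otimes T_0\mapsto\nu^3\theta^2\otimes T_0$. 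Matching the exponents $2$ and $3$ against the prescribed powers $1$ and $3/2$ of $\psi$ forces $\Delta\propto\psi$ and $(\Delta^{(1)})^2=\kappa\Delta^3$ for a constant $\kappa$; the standing hypotheses give $\kappa\ne0$, so $|\Delta|^{-1/2}$ has constant nonzero derivative and $\Delta=c(ax+b)^{-2}$ with $a\ne0$. Then each $\Delta^{(k)}$ is a constant multiple of $(ax+b)^{-(k+2)}$, hence of $\psi^{(k+2)/2}$, so rescaling the $E$-frame by $\psi^{-1/2}$ (a constant multiple of $ax+b$) simultaneously normalizes $g,R,\nabla R,\dots,\nabla^rR$ for every $r$; thus $(M,g_f)$ is $CH_r$ for all $r$, and local homogeneity follows (the analyticity of $f$ and the Singer-type bound in signature $(2,1)$ enter here). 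The main obstacle is the algebraic bookkeeping in this last paragraph — determining precisely which linear isometries preserve the algebraic type of the pair $(R,\nabla R)$ — since it is this, rather than the softer argument behind (3), that pins down the rigidity $(\Delta^{(1)})^2=\kappa\Delta^3$ forced by $SCH_1(1,3)$.
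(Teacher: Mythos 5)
Your proposal is correct and its skeleton is the same as the paper's: compute the curvature in the adapted frame, observe that $\nabla^kR$ has the single independent component $\Delta^{(k)}$, normalize with the hyperbolic rescaling $\partial_x\mapsto\lambda\partial_x$, $\partial_y\mapsto\lambda^{-1}\partial_y$, pin down the structure group of the model space, and extract the rigidity $(\Delta^{(1)})^2=\kappa\Delta^3$ from $SCH_1(1,3)$. The differences are organizational but worth recording. For the curvature computation you exhibit $R=\Delta\,T_0$ with $T_0$ and $dx$ parallel, so $\nabla^kR=\Delta^{(k)}\,(dx)^{\otimes k}\otimes T_0$ falls out at once, replacing the paper's induction in Lemma \ref{lemmaMf}; for Assertion (2) you normalize $\nabla^kR$ to a point-independent tensor (so $\psi_k\equiv 1$ in condition $Q(k)$), whereas the paper keeps $\lambda=1$ and absorbs the variation into $\psi_k=\Delta^{(k)}$ --- both yield $Q(k)$ and hence $CH_r(1,3)$. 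For Assertion (3) you build the invariant from $\rho=-\Delta\,dx^{\otimes 2}$ and $\nabla\rho$ rather than from components on a normalized basis plus Lemma \ref{modeliso}; note that in both your version and the paper's the quantity that is genuinely invariant is the ratio $(\Delta^{(1)})^2/\Delta^3$ (the paper's $\Xi$, computed with $\lambda=|\Delta|^{-1/2}$, equals $(\Delta^{(1)})^2/|\Delta|^3$), so the $CH_1$ obstruction really requires that this \emph{ratio} be nonconstant; your write-up at least makes the normalizing power of $\Delta$ explicit. For Assertion (4) you use the larger group of isometries preserving the algebraic type of the pair $(R,\nabla R)$ and match exponents, where the paper first normalizes and then invokes the stabilizer of Lemma \ref{modeliso}; these are equivalent. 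Your intermediate claim $\Delta\propto\psi$ is not justified as stated (the rescaling parameter $\nu$ may vary from point to point), but it is also not needed: once $(\Delta^{(1)})^2=\kappa\Delta^3$ is established, your explicit solution $\Delta=(ax+b)^{-2}$ (in place of the paper's induction $\Delta^{(k)}=\bar c_k\,\Delta^{\frac12(k+2)}$) lets you normalize directly with $\tilde X=|\Delta|^{-1/2}\partial_x$, $\tilde Y=|\Delta|^{1/2}\partial_y$, giving $CH_r$ for all $r$, and analyticity then yields local homogeneity exactly as in the paper.
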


We remark that Assertions (2) and (3) of Theorem \ref{Mf} shows that there is no finite Singer-type invariant relating to $CH_r(1,3)$ manifolds.

In Section \ref{section4}, we prove the following concerning the manifolds $(M, g_h)$:

\begin{theorem}  \label{Mh}
Let  $(M, g_h)$ be as above, and suppose that $h''$ is nonvanishing.
\begin{enumerate}
\item   $(M, g_h)$ is $CH_0$.
\item  If $h'''$ is nonvanishing, then $(M, g_h)$ is $SCH_1(1,3)$.
\item  The quantity $\left(\frac{h'''}{h''}\right)^2$ is an isometry invariant, and if it is nonconstant, then $(M, g_h)$ is not $CH_1$, and is therefore not locally homogeneous.
\item  Suppose $h'$ and $h'''$ are nonvanishing.  If $(M, g_h)$ is $SCH_2(1,3)$, then it is $CH_2$.  As a result,  if $h$ is analytic, then $SCH_2(1,3)$ implies local homogeneity.
\end{enumerate}
\end{theorem}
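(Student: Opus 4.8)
The plan is to base everything on one explicit curvature computation together with the rigidity of the isotropy group $O(1,2)$ acting on a tangent space. First I would compute the Levi--Civita connection of $g_h$: the only nonzero Christoffel symbols are $\Gamma^y_{tx}=\Gamma^y_{xt}=-h'$ and $\Gamma^t_{xx}=h'$. Consequently $\partial_y$ is parallel, the $1$-form $dx$ is parallel, $dt$ is a unit $1$-form with $\nabla(dt)=-h'\,dx\otimes dx$, and
\[
R=h''\,\Phi,\qquad \Phi:=(dt\wedge dx)\otimes(dt\wedge dx),
\]
a coordinate-constant algebraic curvature tensor. Since $dt\wedge dx$ is parallel, so is $\Phi$, whence $\nabla^kR=(\nabla^kh'')\otimes\Phi$ for every $k$; in particular $\nabla R=h'''\,dt\otimes\Phi$ and $\nabla^2R=(h''''\,dt\otimes dt-h'h'''\,dx\otimes dx)\otimes\Phi$. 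The key algebraic fact I would record is that in a null coframe $dt$ is a unit covector, $dx$ a null covector with $dt\perp dx$, and the only linear isometries of $(T_PM,g_P)$ preserving $dt\wedge dx$ up to a nonzero scalar are the boosts in the $dx$-null plane together with reflections, a boost acting by $dx\mapsto\nu\,dx$, $dt\mapsto\pm dt$, hence $\Phi\mapsto\nu^2\Phi$, for arbitrary $\nu\neq0$. Assertion (1) then follows: $h''$ is continuous and nowhere zero, hence of constant sign, so for any $P,Q$ a boost with $\nu^2=h''(P)/h''(Q)$ composed with the standard identification $T_PM\cong T_QM$ carries $R_P$ to $R_Q$, giving $\mathcal{M}_0(P)\cong\mathcal{M}_0(Q)$.

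\textbf{Assertions (2) and (3).} For (2), by Theorem \ref{SCHchar} it suffices to take a standard Lorentzian model with $A_0=\Phi_0$, $A_1=dt_0\otimes\Phi_0$ and the smooth positive function $\psi:=(h'''/h'')^2$ (positive since $h'',h'''$ are nowhere zero): the isometry $F_P$ with $F_P^*dt_0=dt_P$ and $F_P^*dx_0=\mu_P\,dx_P$, $\mu_P^2=h''(P)/\psi(P)$, satisfies $F_P^*(\psi A_0)=\psi\mu_P^2\Phi_P=R_P$ and $F_P^*(\psi^{3/2}A_1)=\psi^{3/2}\mu_P^2\,dt_P\otimes\Phi_P=\psi^{1/2}h''\,dt_P\otimes\Phi_P=h'''\,dt_P\otimes\Phi_P=\nabla R_P$, using $\psi^{1/2}=h'''/h''$; signs of $h'',h'''$ are absorbed by choosing $\pm A_0,\pm A_1$ in the model (a reflection flips $\nabla R$ but not $R$). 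For (3): any linear isometry $F$ with $F^*R_Q=R_P$ forces $F^*\Phi_Q=c\,\Phi_P$ with $c=h''(P)/h''(Q)$, hence $F^*dx_Q=\pm\sqrt{c}\,dx_P$ and $F^*dt_Q=\pm dt_P+(\text{multiple of }dx_P)$; if moreover $F^*\nabla R_Q=\nabla R_P$, then comparing the coefficients of $dt_P\otimes\Phi_P$ and $dx_P\otimes\Phi_P$ and using $h'''\neq0$ forces that multiple to vanish and $c\,h'''(Q)=\pm h'''(P)$, whence $(h'''/h'')^2(P)=(h'''/h'')^2(Q)$. Taking $F=d\phi_P$ for an isometry $\phi$ shows $(h'''/h'')^2$ is an isometry invariant, and taking $F$ to realize $CH_1$ shows that $(h'''/h'')^2$ nonconstant forbids $CH_1$, hence forbids local homogeneity.

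\textbf{Assertion (4).} Assume $SCH_2(1,3)$ with $h',h'''$ nowhere zero. By Theorem \ref{SCHchar} there is a smooth positive $\psi$ and, for each pair $P,Q$, an induced linear isometry $G=F_Q^{-1}F_P:T_PM\to T_QM$ with $G^*(\nabla^kR_Q)=\rho^{(k+2)/2}\nabla^kR_P$ for $k=0,1,2$, where $\rho=\psi(Q)/\psi(P)$. As in Assertion (3), the $k=0,1$ conditions force $G^*dt_Q=\pm dt_P$, $G^*dx_Q=\pm\nu\,dx_P$ with $\nu^2=\rho\,h''(P)/h''(Q)$, and $\rho=(h'''/h'')^2(Q)/(h'''/h'')^2(P)$. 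Substituting these into the $k=2$ condition and equating the (linearly independent) coefficients of $dt_P\otimes dt_P\otimes\Phi_P$ and of $dx_P\otimes dx_P\otimes\Phi_P$ yields, after simplification, that $\dfrac{(h''')^2}{h''\,h''''}$ and $\dfrac{h'\,h'''}{(h'')^2}$ are constant on $M$; in particular $h''''$ is nowhere zero (otherwise $h''$ would be affine and would vanish somewhere on $\mathbb{R}$, contrary to hypothesis). Integrating the first identity under the standing hypothesis that $h''$ is nowhere zero on all of $\mathbb{R}$ (this is where analyticity of $h$ enters) forces the constant to be $1$ and $h''=Ce^{At}$ with $A\neq0$, so $(h'''/h'')^2\equiv A^2$ is constant, hence $\psi$ is constant, hence $\mathcal{M}_2(P)\cong(V,\varphi,\psi A_0,\psi^{3/2}A_1,\psi^2A_2)$ for one fixed model; thus $(M,g_h)$ is $CH_2$. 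Finally $CH_2$ is $2$-curvature homogeneity and $k_{1,2}=2$, so by Theorem \ref{Singer} the manifold is locally homogeneous (indeed $h=\alpha e^{At}+\beta$, and $g_h$ is then homogeneous).

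\textbf{Main obstacle.} The delicate step is Assertion (4): organizing the three simultaneous scaling conditions on $G$ and extracting exactly the correct differential identities on $h$ requires controlling the residual $O(1,2)$-freedom (checking that no ``null rotation'' survives once $\Phi$ and $\nabla R$ have been matched) and the sign ambiguities (which is precisely why only the squares of the ratios are isometry invariants), and then integrating the identities under the global nonvanishing hypotheses. By contrast, the supporting curvature computation is routine, and the observation that $\Phi$ (equivalently $dt\wedge dx$) is parallel is what collapses all higher covariant derivatives to covariant derivatives of the single scalar $h''$, making the rest of the argument manageable.
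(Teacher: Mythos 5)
Your proposal is correct, and it follows the same overall strategy as the paper: pass to a null (co)frame adapted to the degenerate plane spanned by $dt$ and $dx$, determine the residual isometry group, extract scalar invariants from the matching conditions on $R$, $\nabla R$, $\nabla^2R$, and integrate the resulting ODEs. The differences are in packaging, and two are worth recording. First, your observation that $dx$ and $dt\wedge dx$ are parallel, so that $\nabla^kR=(\nabla^k h'')\otimes\Phi$, replaces the term-by-term computation of Lemma \ref{lemmaMh} and is cleaner; it also yields $-h'h'''$ for $\nabla^2R(\partial_t,\partial_x,\partial_x,\partial_t;\partial_x,\partial_x)$ (the correct sign; the sign is immaterial since only constancy of the ratio is used). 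Second, your direct stabilizer analysis plays the role of Lemmas \ref{modeliso} and \ref{1modeliso}, and the invariants you extract, $h'h'''/(h'')^2$ and $h''h''''/(h''')^2$, are the paper's $\xi_X$ and $\xi_T$. One caution: your stated ``key algebraic fact'' is false as written, since the null rotations $dt\mapsto\pm dt+\mu\,dx$ also stabilize $dt\wedge dx$ up to scale and hence lie in the isotropy group of $\mathcal{M}_0(P)$ (they are the parameter $a_3$ in Lemma \ref{modeliso}); you do reinstate them and kill them by matching $\nabla R$ in Assertions (3) and (4), exactly as Lemma \ref{1modeliso} does, so the argument is sound, but the preliminary lemma should be stated with this extra parameter. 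Likewise, in Assertion (1) the ``standard identification'' $\partial_t\mapsto\partial_t$, $\partial_x\mapsto\partial_x$, $\partial_y\mapsto\partial_y$ is not an isometry (since $g_{xx}=-2h(t)$ varies); one should instead match the adapted frames $\{\partial_t,\partial_x+h\partial_y,\partial_y\}$, or extend the partial isometry on the span of $dt,dx$ by Witt's theorem. Finally, your integration step is actually more careful than the paper's: the constancy of $h''h''''/(h''')^2=k$ admits power-law solutions for $k\neq1$ that cannot be nonvanishing on all of $\mathbb{R}$, and it is this global hypothesis, not analyticity, that forces $k=1$ and $h''=Ce^{At}$; analyticity enters only in the last appeal to $k_{1,2}=2$.
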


We remark that Assertions (2) and (4) of Theorem \ref{Mh} indicate that there may be a finite Singer-type invariant relating to $SCH_r(1,3)$ manifolds, and we conjecture that such a number exists.  We conclude the paper with a short section laying out ideas for further study.

\section{A characterization of strong curvature homogeneity of type (1,3) up to order $r$}  \label{section2}
This section is devoted to the proof of Theorem \ref{SCHchar}, and it follows very similarly to the proof of Proposition 0.1 in \cite{K2013}.  We recall for the purposes of this proof that if $h:T_PM \to T_QM$ is an invertible linear map, then the pullback of the curvature operator $\nabla^k\mathcal{R}_Q$ is defined as follows for $x, y, z, v_1, \ldots, v_k \in T_PM$:
$$
h^*\nabla^k\mathcal{R}_Q(x, y, z; v_1, \ldots, v_k)  = h^{-1}\nabla^k\mathcal{R}_Q(hx, hy, hz; hv_1, \ldots, hv_k).
$$
We have $h^*\nabla^k\mathcal{R}_Q = \nabla^k\mathcal{R}_P$ if and only if
$$
h(\nabla^k\mathcal{R}_P(x, y, z; v_1, \ldots, v_k)) = \nabla^k\mathcal{R}_Q(hx, hy, hz; hv_1, \ldots, hv_k).
$$

\begin{proof}[Proof of Theorem \ref{SCHchar}]   %%%%%%
Let $Q\in M$ be any point, and define $V = T_QM$, $\varphi = g_Q$, and $A_k = \nabla^kR_Q$ for $k = 0, \ldots, r$, so that $(V, \varphi, A_0, \ldots, A_r)$ is a model space.  For convenience, denote $\nabla^k\mathcal{R}_Q = \mathcal{A}_k$.   Suppose $(M, g)$ is $SCH_r(1,3)$, and $h:T_PM \to V$ a homothety with $h^*\mathcal{A}_k = \nabla^k\mathcal{R}_P$ for all $k = 0, \ldots, r$.  Suppose $h = \lambda F$, where $F:T_PM \to V$ is an isometry and in a helpful abuse of notation $\lambda:V \to V$ is the dilation $\lambda(x) = \lambda x$ for $\lambda > 0$.  Note that since $Q$ is fixed, the number $\lambda = \lambda(P)$ depends on the point $P$.  Following the same line of reasoning as in \cite{K2013}, we note that for $x, y, z, w, v_1, \ldots, v_k \in T_PM$ and for any $k$:
$$
\begin{array}{r c l}
\varphi(h\nabla^k\mathcal{R}_P(x, y, z; v_1, \ldots, v_k), hw) & = & \lambda^2 \varphi(F\nabla^k\mathcal{R}_P(x, y, z; v_1, \ldots, v_k), Fw) \\
 & = & \lambda^2g_P(\nabla^k\mathcal{R}_P(x, y, z; v_1, \ldots, v_k), w) \\
  & = & \lambda^2\nabla^kR_P(x, y, z, w; v_1, \ldots, v_k).
\end{array}
$$
On the other hand, we arrive at the following:
$$
\begin{array}{r c l}
\varphi(h\nabla^k\mathcal{R}_P(x, y, z; v_1, \ldots, v_k), hw) & = & \varphi (\mathcal{A}_k(hx, hy, hz; hv_1, \ldots, hv_k), hw) \\
 & = & A_k(hx, hy, hz, hw; hv_1, \ldots, hv_k) \\
 & = & \lambda^{k+4} A_k(Fx, Fy, Fz, Fw; Fv_1, \ldots, Fv_k).
\end{array}
$$
So, for every $k = 0, \ldots, r$, we have
$$
\nabla^kR_P = \lambda^{k+2}F^*A_k.
$$
Define $\psi(P) = \lambda(P)^2,$ so that $\lambda(P)^{k+2} = \psi(P)^{\frac{1}{2}(k+2)}$.  It is now clear the isometry $F$ is the needed isomorphism of model spaces, and the smoothness of $\psi$ is implied by the smoothness of $\nabla^kR$.

Conversely, suppose that there is a positive smooth function $\psi$ on $M$ and for every point $P \in M$, there is an isomorphism $F_P:T_PM \to V$ so that 
$$
\mathcal{M}_r(P) \cong (V, \varphi, \psi(P)A_0, \psi(P)^{3/2}A_1, \ldots, \psi(P)^{\frac{1}{2}(r+2)} A_r).
$$
It follows that $F_P^*A_k = \psi(P)^{-\frac{1}{2}(k+2)}\nabla^kR_P$, and that $(F_P^{-1})^*\nabla^kR_P = \psi(P)^{\frac12(k+2)}A_k$.

Now let $P, Q \in M$ be arbitrary, and let $F_P$ and $F_Q$ be the associated isomorphisms of model spaces.  The map $F = F_Q^{-1}F_P:T_PM \to T_QM$ is an isometry and satisfies
$$
\frac{\psi(P)^{\frac12(k+2)}}{\psi(Q)^{\frac12(k+2)}}F^*\nabla^kR_Q =  \nabla^kR_P.
$$
Define $\lambda = \sqrt{\frac{\psi(P)}{\psi(Q)}}$ and notice that $\lambda^{k+2}F^*\nabla^kR_Q = \nabla^k R_P$.  We claim the homothety $h = \lambda F$ satisfies $h^*\nabla^k \mathcal{R}_Q = \nabla^k\mathcal{R}_P$.  We follow a similar reasoning as in \cite{K2013}, consider arbitrary $x, y, z, w, v_1, \ldots, v_k \in T_PM$, and compute:
$$
\begin{array}{r c l}
g_Q(h(\nabla^k\mathcal{R}_P(x, y, z; v_1, \ldots, v_k)), hw) & = & \lambda^2 g_Q(F(\nabla^k\mathcal{R}_P(x, y, z; v_1, \ldots, v_k)), Fw) \\
 & = & \lambda^2 g_P(\nabla^k\mathcal{R}_P(x, y, z; v_1, \ldots, v_k), w) \\
 &=& \lambda^2 \nabla^kR_P(x, y, z, w; v_1, \ldots, v_k) \\
 &=& \lambda^{4+k}\nabla^kR_Q(Fx, Fy, Fz, Fw; Fv_1, \ldots Fv_k) \\
 & = & \nabla^kR_Q(hx, hy, hz, hw; hv_1, \ldots, hv_k) \\
 & = & g_Q(\nabla^k\mathcal{R}_Q(hx, hy, hz; hv_1, \ldots, hv_k), hw).
\end{array}
$$
Since the metric $g$ is nondegenerate, the above sequence of equalities demonstrate
$$
h(\nabla^k\mathcal{R}_P(x, y, z; v_1, \ldots, v_k)) = \nabla^k\mathcal{R}_Q(hx, hy, hz; hv_1, \ldots, hv_k),
$$
completing the proof that $h^*\nabla^k\mathcal{R}_Q = \nabla^k\mathcal{R}_P$ for every $k$.
\end{proof}

\section{The manifolds $(M, g_f)$}  \label{section3}

We begin our study of the manifolds $(M, g_f)$ by computing the covariant derivatives of the coordinate frames and the curvature.

\begin{lemma}  \label{lemmaMf}
Let $(M, g_f)$ and $\Delta$ be as in Section 1.
\begin{enumerate}
\item  The nonzero covariant derivatives of the coordinate frames are
$$
\nabla_{\partial_t}\partial_t = -f'e^{2f} \partial_y, \quad \nabla_{\partial_x} \partial_t = \nabla_{\partial_t}\partial_x = f'\partial_t.
$$
\item  The only potential nonzero curvature entries up to the usual symmetries are, for all $k$:
$$
\nabla^k R(\partial_x, \partial_t, \partial_t, \partial_x;\partial_x, \ldots, \partial_x) = -e^{2f}\Delta^{(k)}.
$$
\end{enumerate}
\end{lemma}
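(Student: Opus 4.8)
The plan is a direct computation in the coordinate frame $\{\partial_t,\partial_x,\partial_y\}$, with the computation of the higher $\nabla^kR$ organized as an induction on $k$.

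\emph{Christoffel symbols.} Writing $g$ as the matrix with $g_{tt}=e^{2f}$, $g_{xy}=g_{yx}=1$ and all other entries zero, its inverse has the same shape with $e^{-2f}$ in place of $e^{2f}$. Since $f=f(x)$, the only non-constant component of $g$ is $g_{tt}$, and it depends on $x$ alone, so feeding this into $\Gamma^{c}_{ab}=\tfrac12g^{cd}\big(\partial_ag_{bd}+\partial_bg_{ad}-\partial_dg_{ab}\big)$ gives at once that the only nonzero Christoffel symbols are $\Gamma^{y}_{tt}=-f'e^{2f}$ and $\Gamma^{t}_{tx}=\Gamma^{t}_{xt}=f'$; this is assertion (1). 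Two consequences I would record and then use repeatedly: (i) $\partial_y$ is parallel, i.e.\ $\nabla_X\partial_y=0$ for every $X$ (no Christoffel symbol carries a lower $y$), and (ii) $\nabla_{\partial_y}\partial_i=0$ for every coordinate field, so $\nabla_{\partial_y}$ differentiates a tensor built from the coordinate frame simply by differentiating its component functions in $y$.

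\emph{The $k=0$ curvature.} Coordinate fields commute, so $\mathcal{R}(\partial_a,\partial_b)\partial_c=\nabla_{\partial_a}\nabla_{\partial_b}\partial_c-\nabla_{\partial_b}\nabla_{\partial_a}\partial_c$. Using (1), $\nabla_{\partial_x}\nabla_{\partial_t}\partial_t=-e^{2f}\big(f''+2(f')^2\big)\partial_y$ and $\nabla_{\partial_t}\nabla_{\partial_x}\partial_t=-(f')^2e^{2f}\partial_y$, hence $\mathcal{R}(\partial_x,\partial_t)\partial_t=-e^{2f}\Delta\,\partial_y$ and, lowering an index with $g$, $R(\partial_x,\partial_t,\partial_t,\partial_x)=-e^{2f}\Delta$. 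By (i), $R(\cdot,\cdot,\partial_y,\cdot)=0$, and then by the pair symmetry also $R(\partial_y,\cdot,\cdot,\cdot)=0$; together with the antisymmetry of $R$ in its first two and in its last two slots this forces every component other than the one just computed (and its symmetry images) to vanish, which is the $k=0$ case.

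\emph{Induction on $k$.} The inductive hypothesis is: for coordinate frame arguments, $\nabla^kR(X_1,X_2,X_3,X_4;V_1,\dots,V_k)$ vanishes unless $(X_1,X_2,X_3,X_4)$ agrees, up to the symmetries of an algebraic covariant derivative curvature tensor, with $(\partial_x,\partial_t,\partial_t,\partial_x)$ and $V_1=\dots=V_k=\partial_x$, in which case it equals $-e^{2f}\Delta^{(k)}$; in particular every nonzero component is a function of $x$ only. In $(\nabla_W\nabla^kR)(X_1,\dots,X_4;V_1,\dots,V_k)=W\big(\nabla^kR(\cdots)\big)-\sum_i\nabla^kR(\dots,\nabla_WX_i,\dots;\dots)-\sum_j\nabla^kR(\dots;\dots,\nabla_WV_j,\dots)$ the crucial point is that $\nabla_W$ of any coordinate field lies in the span of $\{\partial_t,\partial_y\}$ and never produces a $\partial_x$. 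If $W=\partial_y$ every term vanishes by (ii) and the $x$-dependence; if $W=\partial_t$ then $W$ annihilates the $x$-only component, while each substitution $\partial_t\mapsto\nabla_{\partial_t}\partial_t$ (proportional to $\partial_y$) or $\partial_x\mapsto\nabla_{\partial_t}\partial_x$ (proportional to $\partial_t$) either moves a slot out of the admissible pattern or puts a non-$\partial_x$ into a derivative slot, so all terms vanish by the inductive hypothesis; and if $W=\partial_x$ the only substitution that does not kill its term is $\partial_t\mapsto f'\partial_t$ on the two $\partial_t$-slots of the admissible pattern, so the right-hand side collapses to $\partial_x\!\big(-e^{2f}\Delta^{(k)}\big)-2f'\big(-e^{2f}\Delta^{(k)}\big)=-e^{2f}\Delta^{(k+1)}$, all other components being zero. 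Running the same analysis with a non-$\partial_x$ already present among $V_1,\dots,V_k$ shows the result is zero in that case as well. This is exactly the assertion for $k+1$.

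\emph{The main obstacle.} The Christoffel and $k=0$ computations are routine; the work is in the inductive step, where one must check (a) that once the admissible four-slot pattern is broken no further substitution restores it and no substitution ever introduces a $\partial_x$ into a derivative slot, and (b) that the two correction terms coming from the $\partial_t$-slots cancel precisely the spurious $2f'e^{2f}\Delta^{(k)}$ produced by differentiating the conformal factor in $\partial_x\!\big(-e^{2f}\Delta^{(k)}\big)$. A single sentence fixing which symmetries $\nabla^kR$ inherits (the two antisymmetries, the pair swap, and the Bianchi identities --- exactly those built into ``algebraic covariant derivative curvature tensor'') justifies the phrase ``up to the usual symmetries'' in the statement.
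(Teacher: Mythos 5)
Your proposal is correct and follows essentially the same route as the paper: compute the Christoffel symbols, get $R(\partial_x,\partial_t,\partial_t,\partial_x)=-e^{2f}\Delta$ directly, and then induct on $k$, splitting $\nabla^{k+1}R$ into the derivative of the component (which yields $\partial_x[-e^{2f}\Delta^{(k)}]-2f'(-e^{2f}\Delta^{(k)})=-e^{2f}\Delta^{(k+1)}$) and the substitution terms, which die because covariant derivatives of coordinate fields lie in ${\rm span}\{\partial_t,\partial_y\}$ and never produce a $\partial_x$. Your treatment of why the non-admissible components stay zero is in fact somewhat more explicit than the paper's one-sentence remark, but it is the same argument.
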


\begin{proof}
To prove Assertion (1), we note that the only possible Christoffel symbols of the second kind (up to the usual symmetries on coordinate frames) are 
$$
\Gamma_{ttx} = -f'e^{2f}, \quad {\rm and\ } \Gamma_{txt} = f'e^{2f}.
$$
Assertion (1) now follows.

To prove Assertion (2), we first compute the entries of $R$ on the coordinate frames.  Notice that if any entry of $R$ is $\partial_y$, then one gets $0$ as a result.  Thus up to symmetries we compute the only possible nonzero quantity as
$$
\begin{array}{r c l}
\mathcal{R}(\partial_x, \partial_t)\partial_t & = & \nabla_{\partial_x}\nabla_{\partial_t} \partial_t - \nabla_{\partial_t}\nabla_{\partial_x} \partial_t \\
& = & \nabla_{\partial_x}(-f'e^{2f}\partial_y )- \nabla_{\partial_t}(f'\partial_t) \\
 & = & -e^{2f}\Delta\partial_y.  {\rm\ So} \\
 R(\partial_x, \partial_t, \partial_t, \partial_x) & = & -e^{2f}\Delta.
\end{array}
$$
We prove the rest of Assertion (2) by induction, having already proven the base case.  Suppose that the only nonzero entry of $\nabla^kR$ is as given above.  There are two ways to produce possible nonzero curvature entries in $\nabla^{k+1}R$: either differentiate the components themselves, or produce a situation where the correct tangent vectors appear in the inputs of $\nabla^kR$.  The first strategy gives us only
$$\begin{array}{r c l}
\nabla^{k+1}R(\partial_x, \partial_t, \partial_t, \partial_x;\partial_x, \ldots, \partial_x) &=& \partial_x[-e^{2f}\Delta^{(k)}] - 2f'(-e^{2f}\Delta^{(k)}) \\
 & = & -e^{2f}\Delta^{(k+1)}.
 \end{array}
$$
The second strategy gives no possible nonzero entries since $\nabla_{\partial_t}\partial_x = f'\partial_t$ and this would produce a pairing of $\partial_t$ with $\partial_t$ in the first two or last two inputs of $\nabla^kR$.  Assertion (2) now follows.
\end{proof}

We may now prove Theorem \ref{Mf}.  

\begin{proof}[Proof of Assertions (1) and (2) of Theorem \ref{Mf}]
Suppose $\Delta > 0$, and define a new basis for $T_PM$ as follows, for some positive number $\lambda$:
\begin{equation}  \label{basisf}
T = e^{-f}\partial_t, \quad X = \lambda \partial_x, \quad Y = \frac{1}{\lambda}\partial_y.
\end{equation}
For every $\lambda$, the only nonzero metric quantities are constant and are $g_P(T,T) = g_P(X, Y) = 1$.  On this basis, we have
$$
R(T, X, X, T) = -\lambda^2\Delta.
$$
Setting $\lambda = |\Delta|^{-1/2}$ demonstrates that there is an isomorphism
$$
\mathcal{M}_0(P) \cong (V, \varphi, A),
$$
where $V = {\rm span}\{T, X, Y\}$, $\varphi(T, T) = \varphi(X, Y) = 1$, and $A(T, X, X, T) = -\frac{\Delta}{|\Delta|} = \pm 1$ are the only nonzero quantities up to the usual symmetries.  Assertion (1) follows.

To prove Assertion (2), consider setting $\lambda = 1$ in Equation (\ref{basisf}).  We again have the same metric relations (i.e., constant entries), and the only possible nonzero quantities of $\nabla^kR$ on this basis are
$$
\nabla^kR(T, X, X, T; X, \ldots, X) = -\Delta^{(k)}.
$$

Thus, if $\Delta^{(k)}$ are nonvanishing for all $k$, and if we define the tensor $A_k \in \otimes^{4+k} V^*$ on this basis to satisfy the following as its nonzero entries
$$
\begin{array}{r c l}
A_k(T, X, X, T; X, \ldots, X) &=& - A_k(T, X, T, X;X, \ldots, X) \\
& = & - A_k(X, T, X, T;X, \ldots, X)\\
 & = & A_k(X, T, T, X;X, \ldots, X) \\
  & = & -\frac{\Delta^{(k)}}{|\Delta^{(k)}|}
\end{array}
$$
and define $\psi_k = \Delta^{(k)}$, it follows that 
$$
(T_PM, g_P, \nabla^kR_P) \cong (V, \varphi, \psi_kA_k),
$$
and so $(M, g_f)$ is $CH_r(1,3)$ for all $r$.
\end{proof}

\begin{remark} {\rm 
It is interesting to note that in the above proof, the same isomorphism of model spaces is used to demonstrate
$$
(T_PM, g_P, \nabla^kR_P) \cong (V, \varphi, \psi_kA_k),
$$
although it does not follow that $(M,g_f)$ is $SCH_r(1,3)$ for all $r$, since it may not be the case that $\psi_k = \psi_0^{\frac12(k+2)}$ as in Theorem \ref{SCHchar}.  This possibility of being $SCH_r(1,3)$ of any order for this family is discussed in the remaining assertions of Theorem \ref{Mf}.  }
\end{remark}

In the proofs of Assertions (3) and (4) of Theorems \ref{Mf} and \ref{Mh}, we create various invariants of the model spaces in question.  Although it is not known if \emph{all} Weyl scalar invariants vanish,  it is easy to check that the scalar curvature, $||R||^2$ and $||\nabla R||^2$ vanish for this family, and so a more careful approach is required.  Our methods to construct invariants of model spaces will be of the same flavor as previous work (see \cite{D2009, D2004, G2007, GS2004} for some representative examples).  By definition (see page 26 of \cite{G2007}), if $\mathcal{M}$ is a model space, and $\Xi$ is a quantity depending on $\mathcal{M}$, then we say $\Xi$ is an \emph{invariant} of $\mathcal{M}$ if $\Xi(\mathcal{M}) = \Xi(\mathcal{M}')$ whenever $\mathcal{M} \cong \mathcal{M}'$.  We begin with a useful Lemma.

\begin{lemma}  \label{modeliso}
Let $\varepsilon \neq 0$.  Suppose $V = {\rm span}\{T, X, Y\}$, and an inner product $\varphi$ and algebraic curvature tensor $A$ have, up to the usual symmetries, nonzero entries given by
\begin{equation} \label{modelrelations1}
\varphi(T, T) = \varphi(X, Y) = 1, \quad A(T, X, X, T) = \varepsilon.
\end{equation}
If $F$ is an isomorphism of the model space $(V, \varphi, A)$, then there exist real numbers $a_1, ..., a_6$ where $a_1^2 = a_4^2 = 1$, and 
$$
\begin{array}{r c l}
FT &=& a_1 T   \hfill +  a_2 Y, \\
FX &=& a_3T + a_4X + a_5Y,  \\
FY &= &  \hfill a_6 Y.
\end{array}
$$
\end{lemma}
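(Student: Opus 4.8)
The plan is to represent $F$ by a matrix with respect to the basis $\{T,X,Y\}$ and then read off linear constraints on its entries from the two requirements $F^*\varphi=\varphi$ and $F^*A=A$. Write $FT=p_1T+p_2X+p_3Y$, $FX=q_1T+q_2X+q_3Y$, $FY=r_1T+r_2X+r_3Y$. Since the only nonzero pairings of $\varphi$ are $\varphi(T,T)=\varphi(X,Y)=1$, in these coordinates $\varphi(u,v)=u^Tv^T+u^Xv^Y+u^Yv^X$ (writing $u^T$ etc.\ for the components of $u$), so the six equations $\varphi(Fe_i,Fe_j)=\varphi(e_i,e_j)$ are easy to write down explicitly; I will only need three of them.

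The crux is to exploit $F^*A=A$. The key observation is that $A$ factors as a product of area forms: because the only nonzero component of $A$ up to the curvature symmetries is $A(T,X,X,T)=\varepsilon$ and no input ever lies along $Y$, one checks that $A(x,y,z,w)=-\varepsilon\,\Omega(x,y)\,\Omega(z,w)$, where $\Omega(u,v):=u^Tv^X-u^Xv^T$ is the area form of the $TX$-plane; the only nontrivial point in this verification is the first Bianchi identity, which reduces to the elementary fact that any three vectors in a $2$-plane are linearly dependent. Consequently $F^*A=A$ is equivalent to $\Omega(Fu,Fv)\,\Omega(Fz,Fw)=\Omega(u,v)\,\Omega(z,w)$ for all $u,v,z,w$. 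Since $\Omega\ne 0$ and $\Omega(F\cdot,F\cdot)$ is again an alternating $2$-form, the equality of the rank-one symmetric tensors built from $\Omega(F\cdot,F\cdot)$ and from $\Omega$ forces $\Omega(F\cdot,F\cdot)=\pm\Omega$ (if nonzero vectors $\phi,\psi$ in a vector space satisfy $\phi\otimes\phi=\psi\otimes\psi$, then $\phi=\pm\psi$). Expanding $\Omega(F\cdot,F\cdot)$ in the basis of $2$-forms dual to $T\wedge X,\ T\wedge Y,\ X\wedge Y$, this says precisely that the three $2\times2$ minors formed from the columns $(p_1,p_2)$, $(q_1,q_2)$, $(r_1,r_2)$ satisfy $p_1q_2-p_2q_1=\pm 1$ and $p_1r_2-p_2r_1=q_1r_2-q_2r_1=0$.

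From here the normal form falls out. The two vanishing minors say that $(p_1,p_2)$ and $(q_1,q_2)$ are each proportional to $(r_1,r_2)$; since $p_1q_2-p_2q_1=\pm1\ne0$ these two planar vectors are independent, so $(r_1,r_2)=(0,0)$, i.e.\ $FY=r_3Y$ with $r_3\ne 0$ by invertibility of $F$. Then $\varphi(FT,FY)=r_3p_2=0$ gives $p_2=0$, hence $FT=p_1T+p_3Y$ and $\varphi(FT,FT)=p_1^2=1$; and the surviving minor $p_1q_2=\pm1$ together with $p_1=\pm1$ gives $q_2=\pm1$. Setting $a_1=p_1$, $a_2=p_3$, $a_3=q_1$, $a_4=q_2$, $a_5=q_3$, $a_6=r_3$ then yields exactly the asserted form, with $a_1^2=a_4^2=1$; the remaining metric equations $\varphi(FX,FX)=0$, $\varphi(FX,FY)=1$, $\varphi(FT,FX)=0$ only impose relations among $a_3,a_5,a_6$ and are not needed for the statement.

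The step I expect to be the main obstacle is the middle one: a priori $F^*A=A$ is a system of \emph{quadratic} equations in the nine matrix entries of $F$, and it is precisely the factorization $A=-\varepsilon\,\Omega\otimes\Omega$, together with the cancellation that turns $\Omega(F\cdot,F\cdot)\otimes\Omega(F\cdot,F\cdot)=\Omega\otimes\Omega$ into $\Omega(F\cdot,F\cdot)=\pm\Omega$, that collapses the problem to three clean minor identities. Once those identities are in hand, everything else is routine linear algebra with the sparse tensors $\varphi$ and $A$.
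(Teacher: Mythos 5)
Your proof is correct, and it reaches the same normal form through a genuinely different mechanism at the key step. The paper's proof is much terser: it observes (implicitly, via preservation of the null directions of $A$ --- the set of $v$ with $A(v,\cdot,\cdot,\cdot)=0$, which is exactly ${\rm span}\{Y\}$) that $FY$ must be a multiple of $Y$, then uses $\varphi(FT,FY)=0$ and $\varphi(FT,FT)=1$ to force $FT=a_1T+a_2Y$ with $a_1^2=1$, and finally gets $a_4^2=1$ directly from $A(FT,FX,FX,FT)=\varepsilon$. You instead factor $A=-\varepsilon\,\Omega\otimes\Omega$ with $\Omega$ the area form of the $TX$-plane, use the rank-one cancellation $\phi\otimes\phi=\psi\otimes\psi\Rightarrow\phi=\pm\psi$ to turn the quadratic system $F^*A=A$ into $F^*\Omega=\pm\Omega$, and read off the three minor identities; the two vanishing minors then give $FY\in{\rm span}\{Y\}$, and the surviving minor (after the metric forces $p_2=0$ and $p_1^2=1$) gives $a_4^2=1$. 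The downstream steps coincide with the paper's, so the difference is only in how curvature preservation is exploited: the paper's kernel argument is shorter and needs no computation, while your factorization is more systematic, makes the reduction of the quadratic conditions fully explicit, and yields the slightly stronger piece of information $F^*\Omega=\pm\Omega$ (a unimodularity statement on the $TX$-block) as a byproduct. One small remark: verifying $A=-\varepsilon\,\Omega\otimes\Omega$ only requires matching components on basis tuples, so the Bianchi check, while correct and reassuring that the right-hand side is an algebraic curvature tensor, is not strictly needed for that equality.
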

\begin{proof}
Any isomorphism $F$ of $(V, \varphi, A)$ must necessarily preserve the relations in Equation (\ref{modelrelations1}).  Since $F^*A = A$,  we have $AY = a_6 Y$ for some nonzero $a_6$.  Because $\varphi(FT, FY) = 0$, there must be nonzero real numbers $a_1$ and $a_2$ so that $FT = a_1T + a_2Y$.  Since $\varphi(T, T) = 1$, we must have $a_1^2 = 1$.  Now since $A(FT, FX, FX, FT) = \varepsilon \neq 0$, there must be real numbers $a_3, a_4,$ and $a_5$, with $a_4^2 = 1$ so that $FX = a_3T + a_4X + a_5Y$.
\end{proof}

\begin{proof}[Proofs of Assertions (3) and (4) of Theorem \ref{Mf}]
There is a basis $\{T, X, Y\}$ for $T_PM$, so that the only nonzero entries up to the usual symmetries of the tensors involved are
$$
\varphi(T, T) = \varphi(X, Y) = 1, \quad R_P(T, X, X, T) = -1.
$$
We use the basis given in Equation (\ref{basisf}), where $\lambda = |\Delta|^{-1/2}$.

We demonstrate presently that $\Xi = \nabla R(T, X, X, T;X)^2 = (\Delta^{(1)})^2$ is an invariant of the model space $\mathcal{M}_1(P)$.   On the basis given in Equation (\ref{basisf}), where $\lambda = |\Delta|^{-1/2}$, we have
$$
\nabla R(T, X, X, T; T) = 0, \quad \nabla R(T, X, X, T; X)^2 = \Xi.
$$
Notice that by Lemma \ref{modeliso}, $\Xi$ is preserved by any isomorphism of $\mathcal{M}_0(P)$, and so it is necessarily preserved by any isomorphism of $\mathcal{M}_1(P)$.  Hence, $\Xi$ is an invariant of $\mathcal{M}_1(P)$.

If  $(M, g_f)$ were $CH_1$, then  $\Xi$  must be constant.  By assumption, $\Xi$ is not, so $(M, g_f)$ is not $CH_1$, and is therefore not locally homogeneous.  This completes the proof of Assertion (3).

We now prove Assertion (4).  Suppose $f$ is analytic, and $(M, g_f)$ is $SCH_1$.  Therefore, according to Theorem \ref{SCHchar}, there exists a smooth positive function $\psi$ and isomorphisms
$$
\mathcal{M}_1(P) \cong (V, \varphi, \psi(P) A_0, \psi(P)^{3/2}A_1).
$$
Since $(M, g_f)$ is $CH_0(1,3)$, it must be the case that $A_0$ is the same curvature tensor given in Lemma \ref{modeliso}.  Therefore, there exists a basis $\{T, X, Y\}$ so that the only nonzero entries of these tensors up to the usual symmetries are given below:
$$
g_P(T, T) = g_P(X, Y) = 1, \quad R_P(T, X, X, T) = \pm\psi(P).
$$
We choose this basis to be $T = e^{-f}\partial_t, X = \partial_x, Y = \partial_y$ as in Equation (\ref{basisf}), so that $\psi = R_P(T, X, X, T) =  \pm\Delta,$ depending on the sign of $\Delta$.  On this basis, the nonzero entries of $\nabla R$ are again
$$
\nabla R(T, X, X, T; T) = 0, \quad \nabla R(T, X, X, T; X) = -\Delta^{(1)}.
$$

By assumption, $(M, g_f)$ is $SCH_1(1,3)$, and so there exists some basis $\{\bar T, \bar X, \bar Y\}$ for $T_PM$ so that  the nonzero entries of these tensors up to the usual symmetries are
\begin{equation}  \label{barrel}
\begin{array} {r c l}  
g_P(\bar T, \bar T) &=& g_P(\bar X, \bar Y) = 1, \\
R_P(\bar T, \bar X, \bar X, \bar T) & = & \pm \psi, \\
\nabla R_P(\bar T, \bar X, \bar X, \bar T; \bar X) & = & c_1 \psi^{3/2} {\rm\ or\ } 0, \\
\nabla R_P(\bar T, \bar X, \bar X, \bar T; \bar T) & = & c_2\psi^{3/2} {\rm\ or\ } 0,
\end{array}
\end{equation}
where $c_1,$ and $c_2$ are some nonzero constants.  Equation (\ref{barrel}) lists all possible nonzero entries of $R_P$ and $\nabla R_P$: the group of isomorphisms of $\mathcal{M}_0(P) \cong (V, \varphi, \psi A_0)$ is isomorphic to the group of isomorphisms of $(V, \varphi, A)$, where this model space is the one given in Lemma \ref{modeliso}.  The change of basis $\{T, X, Y\} \to \{\bar T, \bar X, \bar Y\}$ is necessarily an isomorphism of $\mathcal{M}_0(P)$, so we need not consider entries of $\nabla R_P$ that have a $\bar Y$ in any of its slots, as this would result in a zero entry.    

According to Lemma \ref{modeliso}, 
$$
\nabla R_P(\bar T, \bar X, \bar X, \bar T; \bar T) = 0, {\rm\ and\ } (\nabla R_P(\bar T, \bar X, \bar X, \bar T; \bar X))^2 = c_1^2\psi^3 = (\Delta^{(1)})^2.
$$
According to Lemma \ref{modeliso}, the following is an invariant of $\mathcal{M}_1(P)$:
$$
\Xi = \frac{\nabla R_P(\bar T, \bar X, \bar X, \bar T; \bar X)^2}{R_P(\bar T, \bar X, \bar X, \bar T)^3}.
$$
The assumption that $(M, g_f)$ is $SCH_r(1,3)$ and the relations in Equation (\ref{barrel}) force 
$$
\Xi = \frac{[c_1\psi^{3/2}]^2}{[\pm\psi]^3} = \pm c_1^2 {\rm\ to\ be\ constant.}
$$
But it has been established that 
$$\begin{array}{r c l}
R_P(\bar T, \bar X, \bar X, \bar T) &=& R_P(T, X, X, T) = - \Delta, {\rm\ and }\\
(\nabla R_P(\bar T, \bar X, \bar X, \bar T; \bar X))^2 &=&  (\Delta^{(1)})^2.
\end{array}$$
So $\Xi  = \frac{(\Delta^{(1)})^2}{-\Delta^3}$, or $\Delta^{3/2} = c\Delta^{(1)}$ for some nonzero constant $c$.  Inductively, one sees that there exists nonzero constants $\bar c_k$ so that
$$
\Delta^{(k)} = \bar c_k (\Delta)^{\frac{1}{2}(k+2)}.
$$
Additionally, setting $\tilde T = e^{-f}\partial_t, \tilde X = |\Delta|^{-1/2}\partial_x,$ and $\partial_y = |\Delta|^{1/2}$, we can, with assistance from Lemma \ref{lemmaMf}, list up to the usual symmetries the nonzero entries of the metric and all covariant derivatives of $R_P$:
$$
\begin{array}{r c l}
1 & = & g_P(\tilde T, \tilde T) =  g_P(\tilde X, \tilde Y) , {\rm\ and } \\
\bar c_k & = & \nabla^k R_P(\tilde T, \tilde X, \tilde X, \tilde T; \tilde X, \ldots, \tilde X).
\end{array}
$$
Therefore $(M, g_f)$ is $CH_r$ for all $r$.  In the analytic category, this implies $(M, g_f)$ is locally homogeneous (see page 91 of \cite{G2007}).
\end{proof}

\section{The manifolds $(M,g_h)$}  \label{section4}

As in the last section, we present a lemma which exhibits the relevant curvature information.

\begin{lemma}  \label{lemmaMh}
Let $(M, g_h)$ be as in Section 1.
\begin{enumerate}
\item  The nonzero covariant derivatives of the coordinate frames are
$$
\nabla_{\partial_x}\partial_x = h'\partial_t,\quad \nabla_{\partial_x}\partial_t = -h'\partial_y.
$$
\item  The only potential nonzero curvature entries of $R, \nabla R$, and $\nabla^2 R$ up to the usual symmetries:
$$
\begin{array}{r c l}
R(\partial_t, \partial_x, \partial_x, \partial_t) & = & h'', \\
\nabla R(\partial_t, \partial_x, \partial_x, \partial_t; \partial_t) &=& h''', \\
\nabla^2 R(\partial_t, \partial_x, \partial_x, \partial_t; \partial_t, \partial_t) &=& h^{(4)}, \\
\nabla^2 R(\partial_t, \partial_x, \partial_x, \partial_t; \partial_x, \partial_x) &=& h' h^{(3)}.
\end{array}
$$
\end{enumerate}
\end{lemma}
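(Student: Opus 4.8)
The plan is to obtain everything by a direct computation from the Christoffel symbols of $g_h$, exploiting that the only non-constant coefficient of $g_h$ is $g_h(\partial_x,\partial_x)=-2h$ and that it is a function of $t$ alone. First I would record the Christoffel symbols of the first kind $\Gamma_{ijk}=\frac{1}{2}(\partial_i g_{jk}+\partial_j g_{ik}-\partial_k g_{ij})$: since every partial derivative of a metric coefficient vanishes except $\partial_t g_{xx}=-2h'$, the only nonzero ones up to the usual symmetry are $\Gamma_{xxt}=h'$ and $\Gamma_{xtx}=\Gamma_{txx}=-h'$. Raising the last index with the inverse metric (whose nonzero entries are $g^{tt}=1$, $g^{xy}=g^{yx}=1$, $g^{yy}=2h$) leaves $\Gamma^{t}_{xx}=h'$ and $\Gamma^{y}_{xt}=\Gamma^{y}_{tx}=-h'$ as the only nonzero Christoffel symbols of the second kind, which is precisely Assertion (1).

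For Assertion (2) the key preliminary observation is that $\partial_y$ is a parallel vector field: the formula above gives $\nabla_W\partial_y=0$ for every coordinate frame $W$, hence $\nabla\partial_y=0$ and $\mathcal{R}(X,Y)\partial_y=0$. Therefore every component of $R$ with $\partial_y$ in its third argument vanishes, and the curvature symmetries --- which $\nabla R$ and $\nabla^2 R$ inherit in their first four arguments --- propagate this to a $\partial_y$ in any of the first four arguments of $R$, $\nabla R$, or $\nabla^2 R$. A $\partial_y$ in a differentiation slot also gives zero, because $\nabla_{\partial_y}$ annihilates coordinate frames and every curvature coefficient is a function of $t$ alone, so its $\partial_y$-derivative vanishes. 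Consequently every potentially nonzero component has all of its arguments among $\{\partial_t,\partial_x\}$, and antisymmetry in the first two and in the third and fourth arguments forces the first four of them to be $\partial_t,\partial_x,\partial_x,\partial_t$ up to the usual symmetries. Only a short, explicit list of components then remains.

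I would compute these directly using the tensorial identity $(\nabla_W T)(\cdots)=W[T(\cdots)]-\sum T(\cdots,\nabla_W(\cdot),\cdots)$. From $\nabla_{\partial_x}\partial_x=h'\partial_t$ and $\nabla_{\partial_t}\partial_t=0$ one gets $\mathcal{R}(\partial_t,\partial_x)\partial_x=\nabla_{\partial_t}(h'\partial_t)-\nabla_{\partial_x}(-h'\partial_y)=h''\partial_t$, so $R(\partial_t,\partial_x,\partial_x,\partial_t)=h''$. Since this component depends on $t$ alone, only differentiation in the $\partial_t$ direction contributes a derivative term, while every correction term either inserts a $\partial_y$ (via $\nabla_{\partial_x}\partial_t=-h'\partial_y$) or repeats an index inside one of the two antisymmetric pairs (via $\nabla_{\partial_x}\partial_x=h'\partial_t$) and hence vanishes; this yields $\nabla R(\partial_t,\partial_x,\partial_x,\partial_t;\partial_t)=h'''$ and $\nabla R(\partial_t,\partial_x,\partial_x,\partial_t;\partial_x)=0$. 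Iterating once more: the $(\partial_t,\partial_t)$ second covariant derivative produces $\partial_t[h''']=h^{(4)}$; the two mixed second derivatives vanish because $\nabla R(\cdots;\partial_x)$ already vanishes and no correction term contributes; and in the $(\partial_x,\partial_x)$ case the sole surviving term is the one in which $\nabla_{\partial_x}\partial_x=h'\partial_t$ is fed into the final differentiation slot, contributing (up to sign) $h'\,\nabla R(\partial_t,\partial_x,\partial_x,\partial_t;\partial_t)=h'h^{(3)}$. Collecting these components gives exactly the list in Assertion (2).

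The work is entirely organizational, so the main obstacle is simply to be certain that no index combination has been overlooked; the two structural observations above --- $\partial_y$ is parallel, and antisymmetry kills a repeated index within a pair --- reduce this to the handful of components computed above. The one step deserving a little care is confirming that both mixed second covariant derivatives $\nabla^2 R(\partial_t,\partial_x,\partial_x,\partial_t;\partial_t,\partial_x)$ and $\nabla^2 R(\partial_t,\partial_x,\partial_x,\partial_t;\partial_x,\partial_t)$ vanish rather than differing by a curvature term; one checks directly that every term in each is zero.
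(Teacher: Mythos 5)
Your proof is correct and is precisely the direct computation the paper omits (its ``proof'' of this lemma simply defers to the analogous calculation for Lemma \ref{lemmaMf}): the Christoffel symbols, the observation that $\partial_y$ is parallel, and the reduction of everything to the four listed components are all right. One remark on the single place you hedge: carrying the sign through the surviving correction term, $-\nabla R(\partial_t,\partial_x,\partial_x,\partial_t;\nabla_{\partial_x}\partial_x)=-h'\,\nabla R(\partial_t,\partial_x,\partial_x,\partial_t;\partial_t)$, gives $\nabla^2R(\partial_t,\partial_x,\partial_x,\partial_t;\partial_x,\partial_x)=-h'h'''$, the opposite sign from the one printed in the statement; this discrepancy is immaterial to the rest of the paper, which only uses that the associated invariant $\xi_X$ is a nonzero constant.
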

\begin{proof}
These computations are very similar in nature to the calculations done for Lemma \ref{lemmaMf}, and are omitted.
\end{proof}

\begin{proof}[Proof of Assertions (1) and (2) of Theorem \ref{Mh}]
Let $P \in M$ be given.  We define a new basis $\{T, X, Y\}$ for $T_PM$ as
\begin{equation}  \label{basish}
T = \partial_t, \quad X = \partial_x + h\partial_y, \quad Y = \partial_y.
\end{equation}
This basis has the constant metric entries
$$
g_P(T, T) = g_P(X, Y) = 1.
$$
Moreover, by setting 
\begin{equation}  \label{basish2}
\bar T = T, \bar X = \lambda X, {\rm\ and\ } \bar Y = \frac{1}{\lambda} Y,
\end{equation} 
such a change preserves these constant metric entries, and the potentially nonzero entries of $R$ and $\nabla R$ up to the usual symmetries are
\begin{equation}  \label{basisexpression}
R_P(\bar T, \bar X, \bar X, \bar T) = \lambda^2 h'', \quad \nabla R(\bar T, \bar X, \bar X, \bar T; \bar T) = \lambda^2 h'''.
\end{equation}
We may now prove Assertion (1).  Suppose $h''$ is nonvanishing.  Setting $\lambda = \frac{1}{\sqrt{|h''|}}$ demonstrates that $(M, g_h)$ is $CH_0$.

To prove Assertion (2), in observance of Equation (\ref{basisexpression}), we simply look for a $\lambda$ and positive smooth function $\psi$ so that 
\begin{equation}  \label{sch1}
\lambda^2 h'' = \psi,\quad {\rm\ and\ } \quad \lambda^2 h''' = \psi^{3/2}.
\end{equation}
Assume that $h''$ and $h'''$ are nonvanishing.  Equation (\ref{sch1}) can be solved by setting 
\begin{equation}  \label{lambdapsi}
\lambda^2  = \frac{(h''')^2}{(h'')^3},  {\rm\ so\ that\ } \psi = \frac{(h''')^2}{(h'')^2} > 0.
\end{equation}
According to Theorem \ref{SCHchar}, we have demonstrated that $(M, g_h)$ is $SCH_1(1,3)$, completing the proof of Assertion (2).
\end{proof}

Similarly to the proof of Assertions (3) and (4) of Theorem \ref{Mf}, we shall employ the use of invariants to establish Assertions (3) and (4) of Theorem \ref{Mh}.  We first prove Assertion (3) of Theorem \ref{Mh}, and will prove a helpful Lemma before proving Assertion (4).

\begin{proof}[Proof of Assertion (3) of Theorem \ref{Mh}]
Let $P \in M$ be given.  In Equation (\ref{basish}), we constructed the basis $\{\bar T, \bar X, \bar Y\}$ for which the metric and curvature tensor has the following potential nonzero entries for $\lambda = \frac{1}{\sqrt{|h''|}}$:
$$
g_P(\bar T, \bar T) = g_P(\bar X, \bar X) = (R_P(\bar T, \bar X, \bar X, \bar T))^2 = 1.
$$
Thus, the model space $\mathcal{M}_0(P)$ is of the form considered in Lemma \ref{modeliso} where $\varepsilon$ is either $+1$ or $-1$.  Consider then the model space $\mathcal{M}_1(P)$
and the quantity 
$$
\Xi = \nabla R_P (\bar T, \bar X, \bar X, \bar T; \bar T)^2 = \frac{1}{(h'')^2} \cdot (h''')^3.
$$
According to Lemma \ref{modeliso}, $\Xi$ is invariant under isomorphisms of $\mathcal{M}_0(P)$, and is therefore invariant under isomorphisms of $\mathcal{M}_1(P)$, and is therefore an invariant of the model space $\mathcal{M}_1(P)$.  If $(M, g_h)$ is $CH_1$, then $\Xi$ must be constant.  This establishes Assertion (3).
\end{proof}

To prove Assertion (4) of Theorem \ref{Mh}, we will need a lemma which describes the group of isomorphisms of a particular model space.

\begin{lemma}  \label{1modeliso}
Let $\varepsilon_0, \varepsilon_1 \neq 0$.  Suppose $V = {\rm span}\{T, X, Y\}$, and an inner product $\varphi$ and algebraic curvature tensor $A$ have, up to the usual symmetries, nonzero entries given by
\begin{equation} \label{modelrelations}
\varphi(T, T) = \varphi(X, Y) = 1, \quad A(T, X, X, T) = \varepsilon_0.
\end{equation}
Suppose also that $A_1$ is an algebraic covariant derivative curvature tensor and has the following nonzero entry, up to the usual symmetries:
$$
A_1(T, X, X, T; T) = \varepsilon_1.
$$
If $F$ is an isomorphism of the model space $(V, \varphi, A, A_1)$, then there exist real numbers $b_1, ..., b_4$ where $b_2^2 = 1$, and 
$$
\begin{array}{r c l}
FT &=& T   \phantom{+b_i X} +  b_1 Y, \\
FX &=&  \hfill b_2X + b_3Y,  \\
FY &= &  \hfill b_4 Y.
\end{array}
$$
\end{lemma}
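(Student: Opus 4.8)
The plan is to bootstrap from Lemma \ref{modeliso}. Any isomorphism $F$ of $(V, \varphi, A, A_1)$ is in particular an isomorphism of $(V, \varphi, A)$, so Lemma \ref{modeliso} immediately gives the form
$$
FT = a_1 T + a_2 Y, \quad FX = a_3 T + a_4 X + a_5 Y, \quad FY = a_6 Y,
$$
with $a_1^2 = a_4^2 = 1$. The new information is the constraint $F^* A_1 = A_1$, which must kill the extra coefficients $a_2, a_3$ and pin down $a_1$. So the proof is: impose $F^* A_1 = A_1$ and read off the consequences.

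First I would determine $a_1$. Evaluate $A_1$ on the images: using the symmetries of $A_1$ and the fact that any slot filled with $Y$ annihilates $A_1$ (since the only nonzero entry involves $T, X, X, T, T$ with no $Y$), compute
$$
A_1(FT, FX, FX, FT; FT) = a_1^3 a_4^2 \, A_1(T, X, X, T; T) = a_1^3 \varepsilon_1,
$$
because the $a_2 Y$ part of $FT$ and the $a_3 T + a_5 Y$ parts of $FX$ contribute nothing to this particular entry (the $a_3 T$ term would force a $T$ into the $X$-slot, producing an entry of $A_1$ that is zero by the curvature symmetries, just as in the proof of Lemma \ref{modeliso}). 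Setting this equal to $\varepsilon_1$ forces $a_1^3 = 1$, hence $a_1 = 1$ since $a_1$ is real. Relabel $b_2 = a_4$, so $b_2^2 = 1$.

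Next I would show $a_2 = 0$ and $a_3 = 0$. With $a_1 = 1$ now known, I would test $F^* A_1$ against mixed entries. The key computations are $A_1(FT, FX, FX, FT; FX)$ and $A_1(FX, FX, FX, FT; FT)$-type expressions: expanding $FX = a_3 T + a_4 X + a_5 Y$ and $FT = T + a_2 Y$, the terms linear in $a_3$ (respectively $a_2$) pair a $T$ with the wrong slot and would have to vanish against the known zero entries of $A_1$, while the surviving terms produce multiples of $\varepsilon_1$ that must match $A_1$'s value (which is $0$) on those inputs. Tracking these carefully shows that every appearance of $a_2$ and $a_3$ in a nonzero position is forced to zero. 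The main obstacle — and the only real bookkeeping in the proof — is being systematic about \emph{which} entries of $A_1$ to evaluate $F^*$ against so that exactly the coefficients $a_2$ and $a_3$ are isolated without the computation collapsing to a tautology; choosing the inputs $(T,X,X,T;T)$, $(X,X,X,T;T)$, and $(T,X,X,T;X)$ (with all $Y$-slots understood to give zero) should suffice. Finally, rename the free parameters: with $a_2 = a_3 = 0$, $a_1 = 1$, $b_2 = a_4$, $b_3 = a_5$, $b_4 = a_6$, we obtain exactly the claimed normal form, and no further constraints on $b_3, b_4$ arise because $Y$-slots annihilate both $A$ and $A_1$.
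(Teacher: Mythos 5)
Your core argument is the same as the paper's: any isomorphism of $(V,\varphi,A,A_1)$ is in particular an isomorphism of $(V,\varphi,A)$, so Lemma \ref{modeliso} gives the general form, and then the two evaluations $A_1(FT,FX,FX,FT;FT)=\varepsilon_1$ and $A_1(FT,FX,FX,FT;FX)=0$ respectively pin the coefficient of $T$ in $FT$ to be $1$ and kill the coefficient $a_3$ of $T$ in $FX$. (The paper deduces $a=1$ from $a\varepsilon_1=\varepsilon_1$; your $a_1^3\varepsilon_1=\varepsilon_1$ with $a_1$ real is equivalent.) Those two computations are exactly the content of the paper's proof.

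The one misstep is your claim that $F^*A_1=A_1$ also forces $a_2=0$ (the $Y$-component of $FT$). It cannot: every slot of $A_1$ (and of $A$) containing $Y$ gives zero, so $a_2$, which only multiplies $Y$, never appears in any evaluation of $F^*A_1$ on any choice of inputs; there is no system of test entries that "isolates" it. Fortunately this is also unnecessary, since the lemma's stated normal form retains the term $b_1 Y$ in $FT$ --- you simply set $b_1=a_2$ and you are done once $a_1=1$ and $a_3=0$ are established. (If you did want $a_2=0$ --- which is in fact true, so the lemma is not sharp --- the correct source is the metric rather than $A_1$: with $a_3=0$, preservation of $\varphi$ gives $0=\varphi(T,X)=\varphi(FT,FX)=a_2a_4\varphi(Y,X)=a_2a_4$, and $a_4\neq 0$ forces $a_2=0$.) Deleting the $a_2$ discussion leaves a proof that coincides with the paper's.
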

\begin{proof}
Since any isomorphism of this model space is necessarily an isomorphism of $(V, \varphi, A)$, we may apply Lemma \ref{modeliso} to conclude all of what is needed, except possibly that there exists a constant $b$ with $FX = bT + b_3 X + b_4 Y$, or that the coefficient $a$ of $T$ in the expression for $FT$ is equal to 1 (as opposed to squaring to 1 as in Lemma \ref{modeliso}).  But $F^* A_1 = A_1$, and so
$$
0 = A_1(FT, FX, FX, FT; FX) = b\varepsilon_1,
$$
so $b = 0$.  In addition,
$$
\varepsilon_1 = A_1(FT, FX, FX, FT; FT) = a\varepsilon_1,
$$
so $a = 1$, completing the proof.
\end{proof}

We can now establish Assertion (4) of Theorem \ref{Mh}.

\begin{proof}[Proof of Assertion (4) of Theorem \ref{Mh}]
Suppose $(M, g_h)$ is $SCH_2(1,3)$, and let $P \in M$.  There exists a positive smooth function $\psi$ and an isomorphism
$$
\mathcal{M}_2(P) \cong (V, \varphi, \psi(P)A_0, \psi(P)^{3/2}A_1, \psi(P)^2 A_2).
$$
As in the proof of Assertion (2) of Theorem \ref{Mh}, we let $\{\bar T, \bar X, \bar Y\}$ be the basis for $T_PM$ from Equation (\ref{basish2}), where $\lambda$ is defined as in Equation (\ref{lambdapsi}),  so that the following are the only possible nonzero tensor entries, up to the usual symmetries:
$$
\begin{array}{r c l}
1 &=& g_P(\bar T, \bar T) = g_P(\bar X, \bar Y), \\
\psi & = & R(\bar T, \bar X, \bar X, \bar T), \\
\psi^{3/2} & = & \nabla R(\bar T, \bar X, \bar X, \bar T; \bar T).
\end{array}
$$
Assume $h'''$ is nonvanishing.  Thus, the tensors $A_0$ and $A_1$ are the same as those given in Lemma \ref{1modeliso}, and so the assumption that $(M, g_h)$ is $SCH_2(1,3)$ gives us a basis $\{\tilde T, \tilde X, \tilde Y\}$ so that 

\begin{equation}  \label{rel1}
\begin{array}{r c l}
1 &=& g_P(\tilde T, \tilde T) = g_P(\tilde X, \tilde Y),\\
\psi & = & R(\tilde T, \tilde X, \tilde X, \tilde T), \\
\psi^{3/2} & = & \nabla R(\tilde T, \tilde X, \tilde X, \tilde T; \tilde T).
\end{array}
\end{equation}
Such a change of basis $\{\bar T, \bar X, \bar Y\} \to \{ \tilde T, \tilde X, \tilde Y\}$ is necessarily an isomorphism of $\mathcal{M}_1(P)$, and so using Lemma \ref{lemmaMh} and Lemma \ref{1modeliso}, the following are the only possible nonzero entries of $\nabla^2 R$ on such a basis:
\begin{equation}  \label{rel2}
\begin{array}{r c l}
\Xi_T := \nabla^2 R(\tilde T, \tilde X, \tilde X, \tilde T; \tilde T, \tilde T) & = &c_1 \psi^2 {\rm\ or\ } 0, \\
\Xi_X:= \nabla^2 R(\tilde T, \tilde X, \tilde X, \tilde T; \tilde X, \tilde X) & = &c_2 \psi^2 {\rm\ or\ } 0,
\end{array}
\end{equation}
for some nonzero constants $c_1, c_2$.

The quantities
$$
\xi_T:= \frac{\Xi_T}{R(\tilde T, \tilde X, \tilde X, \tilde T)^2} \quad {\rm\ and} \quad \xi_X:= \frac{\Xi_X}{R(\tilde T, \tilde X, \tilde X, \tilde T)^2}
$$
are therefore both invariants of $\mathcal{M}_2(P)$ as both are invariant under any isomorphism of the model $\mathcal{M}_2(P)$ by Lemma \ref{1modeliso}, since any such isomorphism is also an isomorphism of $\mathcal{M}_1(P)$.  Moreover, both $\xi_T$ and $\xi_X$ are constants, considering the relations in Equations (\ref{rel1}) and (\ref{rel2}) on this basis, listed above.

Evaluating $\xi_T$ and $\xi_X$ on the basis $\{\tilde T, \tilde X, \tilde Y\}$ gives us
$$
\xi_T = \frac{h^{(4)}}{(h'')^2}, \quad \xi_X = \frac{h''' h' }{(h'')^2} \neq 0.
$$
We remark that $\xi_X$ is nonzero by assumption, and integrating the differential equation $\xi_X\frac{h''}{h'} = \frac{h'''}{h''}$ for nonzero constant $\xi_X$ reveals that $h''$ is proportional to $h'$.  Differentiating this proportionality, we conclude that there exists nonzero constant $c$ with $h''' = ch'',$ and $h^{(4)} = c^2h''$.  For this particular $h$, we summarize the nonzero curvature and metric information on the basis $\{T, X, Y\}$, where $T = \partial_t$, $X = \frac{1}{\sqrt{|h''|}}(\partial_x + h \partial_y)$, and $Y = \sqrt{|h''|}\partial_y$:

$$
\begin{array}{r c l}
1 & = & g_P(T, T) = g_P(X, Y), \\
\sigma & = & R(T, X, X, T) {\rm\ for\ } \sigma = \pm 1,\\
\sigma c & = & \nabla R(T, X, X, T; T), \\
\sigma c^2 & = & \nabla^2 R(T, X, X, T;T, T), \\
\xi_X & = & \nabla^2 R(T, X, X, T; X, X).
\end{array}
$$
In other words, $(M, g_h)$ is $CH_2$.  In the analytic category, this implies local homogeneity \cite{BD}.

\end{proof}

\section{Perspectives and open questions}

In Singer's original paper \cite{S1960} one sees that he arrives at Theorem \ref{Singer} by pointing out that as the level of curvature homogeneity rises, the (Lie) group of model space isomorphisms strictly decreases in dimension (see also the introduction of  \cite{BD}).  Since these groups of isomorphisms are nested by the level curvature homogeneity, this result makes sense.  In the definition of $CH_r(1,3)$, there is no requirement that a group of model space isomorphisms be nested in the same way and, as such, it is not surprising that there is no finite Singer-type invariant for this generalization of curvature homogeneity (see Assertions (2)  and (3) of Theorem \ref{Mf}).

On the other hand, the property of $SCH_r(1,3)$ \emph{does} have a nested group of model space isomorphisms, and is more strongly related to the property of being $CH_r$.  In fact, the isomorphisms of $$\mathcal{M}_r(P) \cong (V, \varphi, \psi A_0, \ldots, \psi^{\frac{1}{2}(r+2)}A_r)$$ 
are exactly that of $(V, \varphi, A_0, \ldots, A_r),$ which was used in this paper (see the proof of Assertion (4) of Theorems \ref{Mf} and \ref{Mh}).  It is for this reason that we conjecture the existence of a number $s_{p,q}$, depending on the signature of the manifold, so that if it is $SCH_{s_{p,q}}(1,3)$, then it is locally homogeneous.  In lieu of these observations, we conjecture that $s_{p,q} \leq k_{p,q}$, and our examples in this paper support this conjecture.

\section*{Acknowledgments}
This research was jointly funded by NSF grant DMS-1156608, and by California State University, San Bernardino.

\end{document}